\theoremstyle{plain}
\newtheorem{thm}{Theorem}[section]
\newtheorem{lem}[thm]{Lemma}
\newtheorem{prop}[thm]{Proposition}
\newtheorem{cor}[thm]{Corollary}
\theoremstyle{definition}
\newtheorem{defn}[thm]{Definition}
\newtheorem{nota}[thm]{Notation}
\theoremstyle{remark}
\newtheorem{rem}[thm]{Remark}
\newtheorem{exmp}[thm]{Example}
\def\N{{\mathbf N}}
\def\P{{\mathbf P}}
\def\C{{\mathcal C}}
\def\O{\mathcal{O}}
\newcommand{\F}{\mathbf{F}}
\newcommand{\res}{\textrm{res}}
\def\bign#1{\mathclose{\hbox{$\left#1\vbox to8.5\p@{}\right.\n@space$}}\mathopen{}}
\def\Bign#1{\mathclose{\hbox{$\left#1\vbox to11.5\p@{}\right.\n@space$}}\mathopen{}}
\def\biggn#1{\mathclose{\hbox{$\left#1\vbox to14.5\p@{}\right.\n@space$}}\mathopen{}}
\def\Biggn#1{\mathclose{\hbox{$\left#1\vbox to17.5\p@{}\right.\n@space$}}\mathopen{}}
\begin{document}

\title{Codes and the Cartier Operator}

\author{Alain Couvreur}

\address{INRIA Saclay \^Ile-de-France --- CNRS LIX, UMR 7161, \'Ecole Polytechnique, 91128 Palaiseau Cedex}
\email{alain.couvreur@lix.polytechnique.fr}

%\date{\today}

\thispagestyle{empty}

\begin{abstract}
In this article, we present a new construction of codes from algebraic curves.
Given a curve over a non-prime finite field, the obtained codes are defined over a subfield.
We call them {\it Cartier Codes} since their construction involves the Cartier operator.
This new class of codes can be regarded as a natural geometric generalisation of classical Goppa codes.  
In particular, we prove that a well-known property satisfied by classical Goppa codes extends naturally to Cartier codes.
We prove general lower bounds for the dimension and the minimum distance of these codes and
compare our construction with a classical one: the subfield subcodes of Algebraic Geometry codes.
We prove that every Cartier code
is contained in a subfield subcode of an Algebraic Geometry code and that the two
constructions have similar asymptotic performances.

We also show that some known results on subfield subcodes of Algebraic Geometry codes can be proved nicely by using properties of the Cartier operator and that some known bounds on the dimension of subfield subcodes of Algebraic Geometry codes can be improved thanks to Cartier codes and the Cartier operator.
\end{abstract}

\maketitle

\noindent {\bf MSC:} 11G20, 14G50, 94B27\\
\noindent {\bf Key words:} Algebraic Geometry codes, differential forms, Cartier operator, subfield subcodes, classical Goppa codes.

\section*{Introduction}
It is well-known that, with a high probability, a random code is good. However, getting explicit asymptotically good families of codes is not a simple task. 
In the beginning of the eighties Tsfasman, Vl\u{a}du\c{t} and Zink \cite{TVZ} and independently Ihara \cite{Ihara}, proved the existence of asymptotically good infinite families of Algebraic Geometry codes (AG codes) over $\F_q$ for all $q\geq 5$. They proved in particular that for a square $q \geq 49$, some families of AG codes over $\F_q$ beat the Gilbert--Varshamov bound.

For small values of $q$ and in particular for $q=2$, the use of AG codes does not seem to be suitable to produce asymptotically good families of codes. 
A classical approach to construct good codes over small fields is to construct good codes over a finite extension and then use a ``descent'' operation such as \emph{trace codes} or \emph{subfield subcodes} (see \cite[Chapter 9]{sti}).
This is for instance the point of BCH codes, classical Goppa codes or more generally of alternant codes.
For this reason, studying subfield subcodes of AG code is natural.

As far as we know, the first contributions on subfield subcodes of AG codes are due to Katsman and Tsfasman \cite{KatsmanTsfasman} and independently to Wirtz \cite{Wirtz}. Both obtained lower bounds for the dimension of such codes exceeded the generic formulas for subfield subcodes. Upper bounds on the covering radius and the minimum distance of such codes are proved by Skorobogatov in \cite{Skoro}.
Subsequently, Stichtenoth showed in \cite{Sticht_Subfield} that the lower bounds for the dimension due to Katsman et al. and Wirtz are the consequence of a general result on subfield subcodes. 
%Improvements of the estimate of the dimension have been given by Shibuya et al. for codes from $C_{ab}$ curves. 

This article presents a new construction of codes on a finite field $\F_q$ from a curve over an extension $\F_{q^\ell}$.
Our method differs from that of the above-cited references since it is not based on the use of the subfield subcode operation.
The key point of our method is to use differentials fixed by Cartier, that is logarithmic differentials. This is the reason why, we call these codes {\it Cartier codes} and denote them by $Car_q (D,G)$.
These {\it Cartier codes} can be regarded as a natural generalisation of classical Goppa codes, which turn out to be Cartier codes from a curve of genus $0$.
Moreover, it is well-known that given a squarefree polynomial $f$, the classical Goppa codes associated to $f^{q-1}$ and $f^q$ are equal and, as we show in Theorem \ref{EgaliteGen}, this property extends naturally to Cartier codes.

We then study the relations between Cartier codes and the subfield subcodes of AG codes. We prove that a Cartier code is a always a subcode of such a subfield subcode and prove Theorem \ref{thm:codim} yielding an upper bound for the dimension of the corresponding quotient space.
We discuss the minimum distance of Cartier codes and prove two lower bounds for their dimensions in Theorems \ref{dimC} and \ref{thm:improve_dim}.
Finally, thanks to Cartier codes and the Cartier operator, we improve in Corollary \ref{cor:impr} the known estimates for the dimension of subfield subcodes of AG codes $C_{\Omega} (D, G)_{|\F_q}$ when $G$ is non-positive. We also observe that Cartier codes have similar asymptotic performances as subfield subcodes of AG codes.

Thanks to our approach involving the Cartier operator, we are able to give new proofs of some results on subfield subcodes of AG codes. Such new proofs, which are clearly less technical than the original ones are presented in \S \ref{sec:motiv} and Remark \ref{rem:alter_proof}. 
As far as we know, the Cartier operator has never been used in algebraic geometric coding theory up to now.

This article is organised as follows.  Basic notions on subfield subcodes and classical Goppa codes are recalled in Section \ref{sec:pre}. Section \ref{SecAG} is a brief review on AG codes and known results on their subfield subcodes.
After recalling the definition together with some basic features of the Cartier operator, we prove a vanishing property of this map in Section \ref{SecCartier}.
In Section \ref{secCar}, we introduce Cartier codes. We compare them with subfield subcodes of AG codes in Section \ref{sec:compare}. In Section \ref{sec:param}, we discuss the parameters and the asymptotic performances of Cartier codes and subfield subcodes of AG codes. Section \ref{sec:examples} is devoted to examples of Cartier codes from the Klein quartic which illustrate the previous results.

\section{Preliminaries: Classical Goppa codes}\label{sec:pre}

\begin{nota}
Consider a finite extension $\F_{q^\ell}/\F_q$ of finite fields and let $C$ be a code of length $n$ over $\F_{q^\ell}$.
The subfield subcode $C\cap \F_q^n$ over $\F_q$ is denoted by $C_{|\F_q}$.   
\end{nota}

% Recall the following well-known and elementary result.

\begin{lem}\label{SubParam}
  Let $C$ be a code over $\F_{q^\ell}$ with parameters $[n,n-r,d]_{q^\ell}$, then $C_{|\F_q}$ has parameters $[n, \geq n-\ell r, \geq d ]_q$. 
\end{lem}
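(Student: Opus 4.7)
The length and the minimum distance bounds are essentially tautological. Since $C_{|\F_q} = C \cap \F_q^n$ is a subspace of $\F_q^n$, it is a code of length $n$ over $\F_q$. For the minimum distance, any nonzero word of $C_{|\F_q}$ is in particular a nonzero word of $C$ and therefore has Hamming weight at least $d$; hence the minimum distance of $C_{|\F_q}$ is at least $d$.

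The substantive point is the bound on the dimension. The plan is to use a parity-check description of $C$. Since $C$ has codimension $r$ in $\F_{q^\ell}^n$, there exists an $\F_{q^\ell}$-linear surjection
\[
H : \F_{q^\ell}^n \longrightarrow \F_{q^\ell}^r
\]
whose kernel is exactly $C$. Viewing $\F_{q^\ell}^r$ as an $\F_q$-vector space, its $\F_q$-dimension equals $\ell r$.

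Now restrict $H$ to the $\F_q$-subspace $\F_q^n \subseteq \F_{q^\ell}^n$. The map $H|_{\F_q^n} : \F_q^n \to \F_{q^\ell}^r$ is $\F_q$-linear, and by construction its kernel is $C \cap \F_q^n = C_{|\F_q}$. The rank-nullity theorem, applied over $\F_q$, then gives
\[
\dim_{\F_q} C_{|\F_q} \;=\; n - \dim_{\F_q}\bigl(\operatorname{Im} H|_{\F_q^n}\bigr) \;\geq\; n - \dim_{\F_q} \F_{q^\ell}^r \;=\; n - \ell r,
\]
which is the desired inequality.

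There is no real obstacle here: the only ingredient beyond linear algebra is the observation that scalar extension multiplies dimensions by $\ell$, so an $\F_{q^\ell}$-codimension of $r$ can only blow up to an $\F_q$-codimension of at most $\ell r$ upon taking subfield subcodes. The bound is typically not sharp (degeneracies among the rows of $H$ over $\F_q$ can reduce the codimension), which is precisely what makes the refined bounds for AG subfield subcodes later in the paper interesting.
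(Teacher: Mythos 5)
Your proof is correct, and since the paper simply cites \cite[Lemma 9.1.3]{sti} rather than giving its own argument, your parity-check/rank-nullity derivation is exactly the standard proof behind that reference (expanding an $\F_{q^\ell}$-linear check map into $\ell$ times as many $\F_q$-linear conditions). Nothing to add.
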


\begin{proof}
  \cite[Lemma 9.1.3]{sti}
\end{proof}

%Classical Goppa codes is a subfamily of the family of alternant codes (\cite[Chapter 12]{slmc1}).
%An alternant code $C$ is a code of the form $C'\cap \F_q^n$, where $C'$ is a GRS code of length $n$ defined over some finite extension $\F_{q^\ell}$ of $\F_q$.

\begin{defn}[Classical Goppa codes]\label{goppa}
  Let $L:=(\alpha_1, \ldots , \alpha_n)$ be an ordered $n$--tuple of distinct elements of a field $\F_{q^\ell}$. Let $f\in \F_{q^\ell}[x]$ be a polynomial which does not vanish at any of the elements of $L$.
The Goppa code $\Gamma_q (L,f)$ is defined by
$$
\Gamma_{q} (L,f):= \left\{ (c_1, \ldots , c_n)\in \F_{q}^n\ \left|  \ \sum_{i=1}^n \frac{c_i}{x-\alpha_i}\equiv 0 \mod (f)\right. \right\} \cdot
$$
\end{defn}

One can prove that $\Gamma_q (L, f)$ is alternant, i.e. is a subfield subcode of a Generalised Reed--Solomon code over $\F_{q^\ell}$ (\cite[Theorem 12.3.4]{slmc1}) with parameters $[n, n-\deg(f), \deg(f)+1]_{q^\ell}$. From Lemma \ref{SubParam}, the code $\Gamma_q(L,f)$ has parameters 
$$[n, \geq n-\ell \deg (f), \geq \deg (f)+1]_q.$$
These estimates can be improved in some situations thanks to the following well--known result.

\begin{thm}\label{Thm:Egalite}
  Let $L$ and $f$ be as in Definition \ref{goppa}. If $f$ is squarefree. Then,
$$
\Gamma_q (L, f^{q-1})=\Gamma_q (L,f^q).
$$
Thus, the parameters of this code satisfy $[n, \geq n-\ell (q-1)\deg(f), \geq q\deg(f)+1]_q$.
\end{thm}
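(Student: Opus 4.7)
The plan is to prove the non-trivial inclusion $\Gamma_q(L,f^{q-1}) \subseteq \Gamma_q(L,f^q)$ and then read off the parameters from the alternant structure of the code and the bound in Lemma \ref{SubParam}. The reverse inclusion $\Gamma_q(L,f^q) \subseteq \Gamma_q(L,f^{q-1})$ is immediate because $f^{q-1}$ divides $f^q$.

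For the non-trivial inclusion, I would take $c \in \Gamma_q(L,f^{q-1})$, set $R(x) = \sum_i c_i/(x-\alpha_i)$, and show $R(x) \equiv 0 \pmod{f^q}$. The crucial use of the hypothesis that $f$ is squarefree appears here: its roots in an algebraic closure of $\F_{q^\ell}$ are pairwise distinct, so by the Chinese Remainder Theorem it is enough to check $R(x)\equiv 0 \pmod{(x-\beta)^q}$ for each such root $\beta$. Since every $\beta$ differs from each $\alpha_i$ (because $f(\alpha_i)\neq 0$), I would expand each $1/(x-\alpha_i)$ as a geometric series in $(x-\beta)$, so that the congruence modulo a power of $(x-\beta)$ becomes the vanishing of the power sums
$$S_j := \sum_{i=1}^n \frac{c_i}{(\beta-\alpha_i)^j}.$$
The assumption on $c$ says $S_1 = S_2 = \cdots = S_{q-1} = 0$, and what remains is to show $S_q = 0$.

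This is where the key algebraic input enters. Since $c_i \in \F_q$, we have $c_i^q = c_i$, and in characteristic $p$ (with $q$ a power of $p$) the Frobenius identity $(a+b)^q = a^q + b^q$ extends to finite sums, so
$$S_q = \sum_{i=1}^n \frac{c_i^q}{(\beta-\alpha_i)^q} = \left(\sum_{i=1}^n \frac{c_i}{\beta-\alpha_i}\right)^{\!q} = S_1^q = 0.$$
This gives $R(x) \equiv 0 \pmod{(x-\beta)^q}$ at every root $\beta$, hence modulo $f^q$ by CRT, proving the equality of the two Goppa codes.

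For the parameter estimates, I would apply the general alternant bound in two different ways, exploiting the equality just proved. Viewing the code as $\Gamma_q(L,f^{q-1})$ and applying Lemma \ref{SubParam} to the corresponding GRS code of dimension $n - (q-1)\deg f$ yields the dimension bound $\geq n - \ell(q-1)\deg f$. Viewing the same code as $\Gamma_q(L,f^q)$ and using that subfield subcodes inherit the minimum distance of the ambient GRS code gives the distance bound $\geq q\deg f + 1$. The subtle point throughout is really step three: the squarefreeness of $f$ is what makes the CRT decomposition valid, and the Frobenius argument for $S_q$ is the short computation that makes the whole thing work.
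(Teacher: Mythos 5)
Your proof is correct, but it follows a genuinely different route from the one written out in the paper. The paper proves the nontrivial inclusion geometrically (see \S\ref{sec:motiv}): it identifies $\Gamma_q(L,f^{q-1})$ with $C_{\Omega}(D,(q-1)E-P)_{|\F_q}$ on $\P^1$ as in Example \ref{Goppa=diff}, takes the differential form $\omega$ representing a codeword, and argues that $\C_q(\omega)-\omega$ has zero residues everywhere and is regular away from the $P_i$'s and $P$, hence vanishes because $\P^1$ carries no nonzero regular differentials; the key vanishing result (Theorem \ref{Vanish}) applied to the $\C_q$-fixed form $\omega$ then upgrades $\omega\in\Omega((q-1)E-P-D)$ to $\omega\in\Omega(qE-P-D)$, using that $E$ is reduced precisely because $f$ is squarefree. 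Your argument is instead the classical elementary one: squarefreeness gives distinct roots $\beta$, CRT reduces the congruence to each $(x-\beta)^q$, the local expansion turns it into the vanishing of the power sums $S_1,\dots,S_q$, and the Frobenius identity $S_q=S_1^q=0$ (valid since $c_i\in\F_q$) closes the gap; this is essentially the proof in the references the paper cites, and your handling of the parameters (dimension from the $f^{q-1}$ presentation, distance from the $f^q$ presentation) matches the paper's. What each approach buys: yours is self-contained and requires nothing beyond polynomial algebra over finite fields, whereas the paper's proof is deliberately geometric so as to expose exactly where the argument breaks for curves of positive genus (nonzero regular differentials exist), which is what motivates restricting to $\C_q$-fixed forms and defining Cartier codes; your Frobenius computation $S_q=S_1^q$ is, in effect, the coordinate shadow of the identity $\C_q(\omega)=\omega$.
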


\begin{proof}
  See \cite{Some_Japanese} or \cite[Theorem 4.1]{BernLangPeters}.
\end{proof}

Another proof of Theorem \ref{Thm:Egalite} involving the Cartier operator is given in \S~\ref{sec:motiv}.

\begin{rem}
A more general version of the statement could be, let $f_1, \ldots , f_s$ and $h_1, \ldots ,$ $h_t$ be irreducible polynomials in $\F_{q^{\ell}}[x]$, let $a_1, \ldots, a_s, b_1, \ldots , b_t$ be positive integers such that for all $i$, $a_i \equiv q-1 \mod q$ and for all $j$, $b_j \not \equiv q-1 \mod q$, then
$$
\Gamma_q (L, f_1^{a_1}\cdots f_s^{a_s}h_1^{b_1}\ldots h_t^{b_t}) = \Gamma_q (L, f_1^{a_1+1}\cdots f_s^{a_s+1} h_1^{b_1}\ldots h_t^{b_t}).
$$  
\end{rem}

\section{Algebraic Geometry codes}\label{SecAG}

\subsection{Caution} Since AG codes have been introduced by Goppa in \cite{goppa}, they are frequently referred as {\it Goppa codes} or {\it geometric Goppa codes}. However, AG codes are not a generalisation of classical Goppa codes since they do not involve the subfield subcode operation. Actually, AG codes are a generalisation of Reed--Solomon Codes.

\subsection{Context, notation and prerequisites}\label{Subsec:context}
In this article, a curve is smooth projective and geometrically irreducible.
Given a curve $X$ over a field $\F$. We denote by $F$ its function field, by $\Omega_{F/\F}$ its space of rational differential forms and by $g$ its genus.
Given a place $P$ of $F$ , we denote respectively by $\mathcal{O}_{X,P}$ , $\mathfrak{m}_{X,P}$ , $\F_q (P)$ and $v_P$ the local
ring at $P$ , its maximal ideal, the residue field and the valuation at $P$. 
For a divisor $A$ on $X$, we denote by $L(A)$ the space $L(A):=H^0(X, \mathcal{O}_X(A))$ and by $\Omega(A)$ the space
$
\Omega (A):= H^0 (X, \Omega_{X}\otimes \mathcal{O}_X(-A)).$
The $\F_q$--dimen\-sions of these spaces are respectively denoted by $h^0(A)$ and $h^1(A)$.

\begin{defn}\label{defn:CodeGeo}
Let $X$ be a curve over $\F_q$, let $G$ be a divisor on $X$ and $P_1, \ldots , P_n$ be distinct rational points of $X$ avoiding the support of $G$. Set $D:=P_1+\cdots +P_n$.
The code $C_{\Omega} (D,G)$ is defined as the image of the map
$$
\res_D: \left\{
  \begin{array}{ccc}
    \Omega (G-D) & \longrightarrow & \F_q^n \\
    \omega & \longmapsto & (\res_{P_1}(\omega), \ldots , \res_{P_n}(\omega))
  \end{array}
\right. .
$$
\end{defn}

\noindent If $\deg (G)>2g-2$ (or at least if $h^1 (G)=0$), then the dimension and the minimum distance  of such a code satisfy the following well--known lower bounds.
\begin{eqnarray}
 \label{eq:dim_const}  \dim (C_{\Omega}(D,G)) & \geq & n-(\deg (G)+1 -g) \\
 \label{eq:dist_const} d(C_{\Omega} (D,G)) & \geq & \deg (G)+2-2g.
\end{eqnarray}

\begin{exmp}\label{Goppa=diff}
With the notations of Definition \ref{goppa}, let $X$ be the projective line over $\F_q$, let $P_1, \ldots , P_n$ be the points of coordinates $((\alpha_1:1), \ldots , (\alpha_n:1))$ and $P$ be the point $(1:0)$. Regarding $f$ as a rational function on $\P^1$,
let $E:=(f)_0$ be the divisor of the zeroes of $f$. 
The Goppa code $\Gamma_q (L,f)$ is nothing but
$
C_{\Omega} (D,E-P)_{|\F_q}
$ (see \cite[Example 3.4]{hohpel}).
\end{exmp}

To conclude these prerequisites, we recall the following definition which is useful in what follows.

\begin{defn}
  A positive divisor $G$ on a curve $X$ is said to be \emph{reduced} if the corresponding subscheme of $X$ is reduced. That is, $G$ is a formal sum of places $G=m_1 Q_1+\cdots +m_s Q_s$, where all the $m_i$'s are equal to $1$. 
\end{defn}

\subsection{Subfield subcodes of Algebraic Geometry codes}
Given a curve $X$ on $\F_{q^{\ell}}$ with $\F_{q^{\ell}}$--divisors $D, G$ as in Definition \ref{defn:CodeGeo}. Let us consider the code $C_{\Omega} (D, G)_{|\F_q}$.
Lemma \ref{SubParam} together with (\ref{eq:dim_const}) and (\ref{eq:dist_const}) assert that if $h^1 (G)=0$, then this code over $\F_q$ has parameters
$$[n,k \geq n-\ell(\deg (G)+1-g),d \geq \deg (G)+2-2g].$$

The lower bound for the dimension has been improved by Kastman and Tsfasman \cite{KatsmanTsfasman}  and independently by Wirtz \cite{Wirtz} under some condition on $G$. Namely, if $\deg (G)>2g-2$ and $G\geq qG_1 \geq 0$ for some divisor $G_1$, then from \cite[Theorem 1]{Wirtz},
\begin{equation}
  \label{eq:wirtz_dimension}
k \geq n-1-\ell \deg (G-G_1)+\ell h^1 (G_1).
\end{equation}

\medbreak

\noindent Subsequently, Stichtenoth proved the following generalisation of (\ref{eq:wirtz_dimension}).

\begin{thm}[{\cite[Theorem 4]{Sticht_Subfield}}]\label{thm:dim_sticht}
Let $X$ be a curve over $\F_{q^{\ell}}$. Let $G, D$ be as in Definition \ref{defn:CodeGeo} and $G\geq qG_1$ (possibly non positive), then 
$$
\dim (C_{\Omega} (D,G)_{|\F_q}) \geq \left\{
  \begin{array}{lcc}
    n-1-\ell (h^0(G)-h^0(G_1)) & {\rm if} & G\geq 0\\
    n-\ell (h^0 (G) - h^0 (G_1)) & {\rm if} & G\ngeq 0
  \end{array}
\right. .
$$
\end{thm}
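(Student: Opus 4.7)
My plan is to combine Delsarte duality with an Artin--Schreier trick enabled by the hypothesis $G\ge qG_1$.

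First, by Delsarte's theorem the dual of a subfield subcode equals the trace of the dual code:
$$(C_{\Omega}(D,G)_{|\F_q})^{\perp} \;=\; \operatorname{Tr}_{\F_{q^\ell}/\F_q}\bigl(C_{\Omega}(D,G)^\perp\bigr) \;=\; \operatorname{Tr}\bigl(C_L(D,G)\bigr),$$
where $C_L(D,G):=\{(f(P_1),\dots,f(P_n)):f\in L(G)\}$ is the dual functional AG code. Hence $\dim_{\F_q}(C_{\Omega}(D,G)_{|\F_q}) = n - \dim_{\F_q}\operatorname{Tr}(C_L(D,G))$, and the task reduces to an upper bound on the right-hand quantity. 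I introduce the $\F_q$-linear evaluation-then-trace map
$$\Phi:L(G)\longrightarrow \F_q^n,\qquad f\longmapsto (\operatorname{Tr}_{\F_{q^\ell}/\F_q}(f(P_i)))_{i=1}^n,$$
whose image is precisely $\operatorname{Tr}(C_L(D,G))$. By rank--nullity and $\dim_{\F_q}L(G)=\ell h^0(G)$, it suffices to exhibit a large $\F_q$-subspace of $\ker\Phi$.

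The key input is Frobenius invariance of the trace. For any $g\in L(G_1)$, the function $g^q$ lies in $L(qG_1)\subset L(G)$ by hypothesis, while $g$ itself is automatically regular on $\supp D$ because $\supp G\cap \supp D=\emptyset$ together with $G\ge qG_1$ force $v_{P_i}(G_1)\le 0$. Since Frobenius fixes the trace, $\Phi(g^q)=(\operatorname{Tr}(g(P_i)^q))_i=(\operatorname{Tr}(g(P_i)))_i$. In the favorable case $L(G_1)\subset L(G)$, which holds in particular whenever $G_1\ge 0$ (then $G\ge qG_1\ge G_1$), the Artin--Schreier map $\alpha:L(G_1)\to L(G)$, $g\mapsto g^q-g$, is $\F_q$-linear with image in $\ker\Phi$. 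Its kernel is $L(G_1)\cap\F_q$, of $\F_q$-dimension $1$ if $G_1\ge 0$ and $0$ otherwise, yielding the two cases of the theorem directly by rank--nullity.

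For the general case where $L(G_1)\not\subset L(G)$, which is permitted when $G\not\ge 0$, I would enlarge the domain of $\Phi$ to the $\F_{q^\ell}$-subspace $L(G)+L(G_1)$ of functions regular on $\supp D$. Its image remains $\operatorname{Tr}(C_L(D,G))$, because every $\Phi(g)$ with $g\in L(G_1)$ equals $\Phi(g^q)$ with $g^q\in L(G)$; meanwhile the kernel is enlarged by $\{g^q-g:g\in L(G_1)\}$, of full $\F_q$-dimension $\ell h^0(G_1)$ when $1\notin L(G_1)$. A dimension count using $\dim_{\F_{q^\ell}}(L(G)+L(G_1))=h^0(G)+h^0(G_1)-h^0(\inf(G,G_1))$ then yields the asserted bound. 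The main obstacle is precisely this last step: one must verify that enlarging the domain by $L(G_1)$ increases the kernel by the same $\F_q$-dimension, so no dimension is lost compared with the naive Artin--Schreier count. The asymmetry in the statement---the extra $-1$ in the $G\ge 0$ case---reflects exactly whether $1\in L(G_1)$, which is the sole source of a nontrivial Artin--Schreier kernel.
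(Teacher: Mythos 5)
Your route---Delsarte duality $\bigl(C_{\Omega}(D,G)_{|\F_q}\bigr)^{\perp}=\operatorname{Tr}(C_L(D,G))$ followed by the Artin--Schreier map $g\mapsto g^q-g$ on $L(G_1)$---is essentially Stichtenoth's original argument; the paper itself gives no proof of this statement, only the citation, so there is nothing internal to compare against. Your ``favorable case'' is handled correctly and in fact covers more than you claim: $G\geq 0$ together with $G\geq qG_1$ already forces $G\geq G_1$ (at a place with $v_P(G_1)\geq 0$ one has $v_P(G)\geq qv_P(G_1)\geq v_P(G_1)$, and at a place with $v_P(G_1)<0$ one has $v_P(G)\geq 0>v_P(G_1)$), so $L(G_1)\subseteq L(G)$, the differences $g^q-g$ lie in $\ker\Phi\cap L(G)$, and rank--nullity gives the stated bound with the $-1$ governed by $\dim_{\F_q}(L(G_1)\cap\F_q)\leq 1$. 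That part is a complete proof of the $G\geq 0$ case.

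The obstacle you flag at the end is, however, fatal rather than technical. With the enlarged domain $L(G)+L(G_1)$ your count gives
$$
\dim_{\F_q}\operatorname{Tr}(C_L(D,G))\;\leq\;\ell\bigl(h^0(G)+h^0(G_1)-h^0(\min(G,G_1))\bigr)-\ell\,h^0(G_1)\;=\;\ell\bigl(h^0(G)-h^0(\min(G,G_1))\bigr),
$$
i.e.\ only the theorem with $G_1$ replaced by $\min(G,G_1)$, which is strictly weaker precisely when $L(G_1)\not\subseteq L(G)$. No additional kernel can close this, because the statement as printed is false without the hypothesis $G\geq G_1$. Take $q=2$, $\ell=2$, $X=\P^1$ over $\F_4$, $D=P_1+P_2+P_3$ the points $x=1,\omega,\omega^2$, $G=2P_\infty-2P_0$ and $G_1=P_\infty-P_0$; then $G=qG_1$, $G\ngeq 0$, $L(G)=\F_4\, x^2$ and $L(G_1)=\F_4\, x$, so $h^0(G)=h^0(G_1)=1$ and the claimed bound reads $\dim C_{\Omega}(D,G)_{|\F_2}\geq 3$. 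But $C_{\Omega}(D,G)=C_L(D,G)^{\perp}$ with $C_L(D,G)\neq 0$, so $\F_2^3\not\subseteq C_{\Omega}(D,G)$; in fact $C_{\Omega}(D,G)_{|\F_2}=\{v\in\F_2^3 : v_1+v_2\omega+v_3\omega^2=0\}=\F_2\cdot(1,1,1)$ has dimension $1$. So the correct statement requires the additional hypothesis $G\geq G_1$ (automatic when $G\geq 0$), which is exactly the hypothesis the paper does impose in its own Theorems \ref{thm:codim} and \ref{dimC}; with it, your favorable-case argument already proves everything, and the second half of your proposal should be deleted rather than repaired.
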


\begin{rem}
The original statement \cite[Theorem 4]{Sticht_Subfield} requires the hypothesis $\deg (G)<n$. This hypothesis is actually useless. Indeed, \cite[Theorem 1]{Sticht_Subfield} gives
$$
\dim (C_{\Omega} (D,G)_{|\F_q}) \geq \left\{
  \begin{array}{lcc}
    n-1-\ell (h^0(G)-h^0 (G-D)-h^0(G_1)+h^0(G_1-D))  & {\rm if} & G\geq 0\\
    n-\ell (h^0 (G) -h^0 (G-D) - h^0 (G_1)+h^0(G_1-D)) & {\rm if} & G\ngeq 0
  \end{array}
\right. ,
$$
which is at least as good as Theorem \ref{thm:dim_sticht}.
\end{rem}

\begin{rem}
For $h^1(G)=0$ and $G\geq 0$, Theorem \ref{thm:dim_sticht} gives exactly (\ref{eq:wirtz_dimension}).  
\end{rem}

On the other hand, Wirtz proposed also a generalisation of Theorem \ref{Thm:Egalite}.

\begin{thm}[{\cite[Theorem 2]{Wirtz}}]\label{Thm:Wirtz_2}
Let $X, D, G, G_1$ be as in Theorem \ref{thm:dim_sticht}. Assume that $\deg (G_1)\geq 2g-2$ and $G_1\geq 0$. Let $G_U$ be the reduced divisor defined as the sum of the places $P$ such that $v_P(G)\equiv q-1 \mod q$. Then,
$$
C_{\Omega}(D, G)_{|\F_q} = C_{\Omega} (D, G+G_U)_{|\F_q}.
$$
\end{thm}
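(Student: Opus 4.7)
The inclusion $C_{\Omega}(D, G+G_U)_{|\F_q} \subseteq C_{\Omega}(D, G)_{|\F_q}$ is immediate since $\Omega(G+G_U-D) \subseteq \Omega(G-D)$. For the reverse inclusion, pick $c \in C_{\Omega}(D, G)_{|\F_q}$ and write $c = \res_D(\omega)$ for some $\omega \in \Omega(G-D)$ with $c \in \F_q^n$. The plan is to show that $\omega$ itself must lie in $\Omega(G+G_U-D)$, i.e.\ that $v_Q(\omega) \geq v_Q(G)+1$ at every $Q \in \supp(G_U)$. I would use the Cartier operator $\mathcal{C}$ through three standard properties: its $q^{-1}$-semilinearity $\mathcal{C}(f^q\eta) = f\,\mathcal{C}(\eta)$; the residue identity $\res_P(\mathcal{C}\eta) = \res_P(\eta)^{1/q}$; and its local action $\mathcal{C}(t^{qi-1}\,dt) = t^{i-1}\,dt$, with $\mathcal{C}$ annihilating all other monomials $t^j\,dt$ in a local parameter~$t$.

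The first key step is to show that $\mathcal{C}$ maps $\Omega(G-D)$ into $\Omega(G_1-D)$. The valuation estimate $v_Q(\mathcal{C}\omega) \geq \lceil(v_Q(\omega)+1)/q\rceil - 1$, combined with $v_Q(\omega) \geq v_Q(G) \geq q\,v_Q(G_1)$, yields $v_Q(\mathcal{C}\omega) \geq v_Q(G_1)$. The second step uses $c \in \F_q^n$, i.e.\ $c^{1/q} = c$, to get $\res_D(\mathcal{C}\omega) = c = \res_D(\omega)$. Combining these, $\omega - \mathcal{C}\omega$ lies in $\Omega(G_1-D)$ with zero residues at each $P_i$, hence $\omega - \mathcal{C}\omega \in \Omega(G_1)$. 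Since $\deg G_1 \geq 2g-2 = \deg K$, Serre duality forces $\dim \Omega(G_1) = h^0(K - G_1) \leq 1$, with equality only when $G_1 \sim K$.

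In the generic case $\Omega(G_1) = 0$, we deduce $\omega = \mathcal{C}\omega$, so that $\omega$ is Cartier-fixed. At any $Q \in \supp(G_U)$ with $v_Q(G) = qk-1$, choose a local parameter $t$ and write $\omega = \sum_{j \geq qk-1} a_j\, t^j\,dt$. Identifying coefficients in $\omega = \mathcal{C}\omega$ produces the recursion $a_m^q = a_{qm+q-1}$ for every relevant $m$; specializing to $m = k-1$ gives $a_{qk-1} = a_{k-1}^q$, and since $k-1 < qk-1$ the valuation bound forces $a_{k-1} = 0$, hence $a_{qk-1} = 0$. This yields $v_Q(\omega) \geq qk = v_Q(G)+1$ at every place in $\supp(G_U)$, placing $\omega$ in $\Omega(G+G_U-D)$ as required.

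The main obstacle is the degenerate case $G_1 \sim K$, in which $\Omega(G_1) = \F_{q^\ell}\,\omega_0$ is one-dimensional with $(\omega_0) = G_1$, and the equation $\omega - \mathcal{C}\omega = \lambda\omega_0$ may have a nonzero scalar $\lambda$. Since $\omega_0$ is everywhere regular, its residues at the $P_i$'s vanish, so $\omega - \lambda\omega_0 = \mathcal{C}\omega$ represents the same codeword, but it now sits only in the larger space $\Omega(G_1-D)$, not in $\Omega(G+G_U-D)$. To overcome this I would iterate the Cartier operator, tracking the one-dimensional error along the descending chain $\mathcal{C}\colon \Omega(G_1-D) \to \Omega(\lfloor G_1/q\rfloor - D) \to \cdots$, and combine this with the local coefficient recursion at each $Q \in \supp(G_U)$; the constraints imposed simultaneously at each place should pin down the scalar and force the same conclusion $a_{qk-1} = 0$ that the Cartier-fixedness argument gave in the generic case.
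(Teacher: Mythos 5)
Your argument is, in substance, the paper's own: the paper proves this statement (Remark \ref{rem:alter_proof}) by combining Theorem \ref{thm:codim} (whose snake-lemma proof is exactly your observation that $\omega-\C_q\omega$ has $\F_q$-rational-residue-killed residues and lands in $\Omega(G_1)$, so that $h^1(G_1)=0$ forces $\omega$ to be Cartier-fixed) with Theorem \ref{EgaliteGen}, which rests on the local vanishing Theorem \ref{Vanish} --- precisely your coefficient recursion $a_m^q=a_{qm+q-1}$ specialised at $m=k-1$. All the individual steps you give check out: the valuation estimate $v_Q(\C_q\omega)\geq\lceil (v_Q(\omega)+1)/q\rceil-1=\lfloor v_Q(\omega)/q\rfloor$ is the paper's Corollary \ref{cor:added}, the residue identity is Proposition \ref{local}(iv), and the conclusion $a_{qk-1}=a_{k-1}^q=0$ is exactly the paper's proof of Theorem \ref{Vanish} written in the completed local ring. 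So in the case $h^1(G_1)=0$ your proof is correct and is the paper's proof, merely unrolled rather than packaged through the intermediate notion of Cartier codes.

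The one genuine issue is the degenerate case you flag yourself: $\deg G_1=2g-2$ with $G_1\sim K$, where $\Omega(G_1)$ is one-dimensional and $\omega-\C_q\omega=\lambda\omega_0$ may have $\lambda\neq 0$. Your proposed fix (iterating $\C_q$ and hoping the local constraints ``pin down the scalar'') is not a proof; note in particular that $\C_q\omega_0$ need not lie back in $\Omega(G_1)$, since $v_P(\C_q\omega_0)\geq\lfloor v_P(G_1)/q\rfloor$ can be strictly smaller than $v_P(G_1)$, so the ``error'' does not propagate along a chain you control, and subtracting a multiple of $\omega_0$ from $\omega$ only places the codeword in $\Omega(G_1-D)$, which does not sit inside $\Omega(G+G_U-D)$. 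You should be aware, however, that the paper's own alternative argument has exactly the same blind spot: Remark \ref{rem:alter_proof} silently reads the hypothesis as $\deg(G_1)>2g-2$ so that $h^1(G_1)=0$, and the full statement with $\deg(G_1)\geq 2g-2$ is covered only by the citation to Wirtz. So your proposal reproduces the paper's reasoning faithfully, including its limitation; to claim the theorem as stated you would either need to restrict to $h^1(G_1)=0$ or supply a separate argument (or citation) for the canonical boundary case.
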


\section{The Cartier operator}\label{SecCartier}

The Cartier operator is a semi-linear endomorphism of the space of rational differential forms in positive characteristic. In terms of Serre duality, it corresponds to the adjoint of the Frobenius map.

%\subsection{Context}
We keep the context of Section \ref{SecAG}. Moreover, in what follows, $x$ denotes a separating element of $F/\F_q$ (\cite[\S 3.10]{sti}). We denote by $p$ the characteristic of $F$ and by $\F_p$ the corresponding prime field.

%\subsection{Definition}
\begin{defn}[The Cartier operator]
  Let $\omega \in \Omega_{F/\F_q}$. There exists $f_0, \ldots , f_{p-1}$ such that
$
\omega=(f_0^p+f_1^p x+\cdots +f_{p-1}^px^{p-1})dx.
$
The Cartier operator $\C$ is defined by
$$
\C(\omega):=f_{p-1} dx.
$$
The definition does not depend on the choice of $x$ (see \cite[Proposition 1]{Seshadri}).
\end{defn}

\subsection{Local and global Properties of the Cartier operator}

%We summarise some well-known properties of the Cartier operator.
We refer the reader to \cite{car, Cartier, Seshadri, TVN} for the proofs of the following statements.

\begin{prop}[Local properties of $\C$]\label{local}
  Let $P$ be a place of $F$. For all $\omega \in \Omega_{F/\F_{q^{\ell}}}$,
  \begin{enumerate}[(i)]
  \item\label{posit} $v_P(\omega)\geq 0\ \Longrightarrow \ v_P (\C(\omega)) \geq 0$;
  \item $v_P(\omega)\leq -2\ \Longrightarrow \ v_P (\C(\omega))>v_P (\omega)$;
  \item\label{val-1} $v_P(\omega)=-1\ \Longrightarrow \ v_P (\C(\omega))=-1$;
  \item\label{resi} $\res_P(\C(\omega))=\res_P(\omega)^{1/p}$.

  \end{enumerate}
\end{prop}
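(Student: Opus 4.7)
The plan is to reduce all four statements to a single explicit computation of how $\C$ acts on monomials in a uniformizer at $P$.

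First, I would fix a separating local uniformizer $t$ at $P$. This is not automatic — one needs a local parameter that is also separating for $F/\F_q$ — but it is standard: starting from a separating element $y$ and a local uniformizer $u$, a generic $\F_q$-linear combination $t = u + \lambda y^N$ (with $N$ large) is both a uniformizer at $P$ and separating, since separability is an open condition. Once $t$ is chosen, $\omega = h\,dt$ for some $h \in F$, and we may expand $h = \sum_{i \geq v_P(\omega)} c_i t^i$ in the completion $\widehat{F}_P \cong \F_q(P)(\!(t)\!)$, with $c_i \in \F_q(P)$ and $c_{v_P(\omega)} \neq 0$. The residue field $\F_q(P)$ is perfect, so $p$-th roots of the $c_i$'s exist uniquely inside $\widehat{F}_P$.

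The core calculation is the formula
\[
\C(c\,t^i\,dt) \;=\;
\begin{cases}
c^{1/p}\, t^{(i+1)/p - 1}\,dt & \text{if } i \equiv -1 \pmod p,\\
0 & \text{otherwise.}
\end{cases}
\]
To prove this, write $i = pm + r$ with $0 \leq r \leq p-1$, so that $c\,t^i\,dt = c\,(t^m)^p\,t^r\,dt = (c^{1/p} t^m)^p\,t^r\,dt$. Comparing with the decomposition $f_0^p + f_1^p t + \cdots + f_{p-1}^p t^{p-1}$ that defines $\C$, only the $t^r$-slot is nonzero, with $f_r = c^{1/p} t^m$. By definition $\C$ extracts $f_{p-1}$, which vanishes unless $r = p-1$, i.e.\ $i \equiv -1 \pmod p$, and then $m = (i+1)/p - 1$. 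A short argument using the $\F_p$-linearity and local continuity of $\C$ (or, more concretely, splitting $\omega$ as a Laurent polynomial part plus a part of arbitrarily large $v_P$ and invoking a weak form of (i) on the tail) shows that this monomial formula applies term-by-term to the Laurent expansion of $\omega$.

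With the formula in hand, each statement is immediate by reading off the lowest-valuation term of $\C(\omega)$. Let $N = v_P(\omega)$ and let $i_0$ be the smallest $i \geq N$ with $i \equiv -1 \pmod p$ and $c_i \neq 0$; then $v_P(\C(\omega)) \geq (i_0+1)/p - 1$, with equality when $c_{i_0}^{1/p}\neq 0$, which holds. For (i), $N \geq 0$ forces $i_0 \geq p-1$, giving $v_P(\C(\omega)) \geq 0$. For (ii), $N \leq -2$ together with $i_0 \leq N + p - 1$ yields $(i_0+1)/p - 1 > N$ (one checks the inequality $N(p-1) < 1 - p + (i_0 - N)$ directly in the two sub-cases $N \equiv -1 \pmod p$ and $N \not\equiv -1 \pmod p$). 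For (iii), $N = -1$ satisfies $-1 \equiv -1 \pmod p$, so $i_0 = -1$ and the leading term is $c_{-1}^{1/p} t^{-1}\,dt$, giving $v_P(\C(\omega)) = -1$. For (iv), the residue is the coefficient of $t^{-1}$; this coefficient in $\C(\omega)$ corresponds to $(i+1)/p - 1 = -1$, i.e.\ $i = -1$, and equals $c_{-1}^{1/p} = \res_P(\omega)^{1/p}$.

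The main obstacle will be the two pieces of local-to-global technology: first, the existence of a separating local uniformizer, and second, justifying the term-by-term application of $\C$ to a formal Laurent series from the definition, which is given only on rational differentials. The second point is the real content — one needs to know that $\C$ extends continuously to $\widehat{F}_P$ for the $t$-adic topology; this can be done once and for all by using $\F_p$-linearity and the uniform bound $v_P(\C(\eta)) \geq \lfloor (v_P(\eta)-p+1)/p \rfloor$ that also comes out of the monomial formula.
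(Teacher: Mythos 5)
Your proposal is correct, but note that the paper does not actually prove Proposition \ref{local}: it delegates the proofs to the references \cite{car, Cartier, Seshadri, TVN}. What you have written is essentially the classical argument from that literature (expand $\omega$ in the completion at $P$ and compute $\C$ on monomials $c\,t^i\,dt$), and the monomial formula, the case analysis for (i)--(iv), and the residue computation are all right. Two remarks. First, the separating uniformizer is free of charge: since the base field is perfect, \emph{every} prime element at $P$ is a separating element of $F$ (\cite[Prop.~3.10.2]{sti}), so the perturbation $t=u+\lambda y^N$ is unnecessary. Second, you have correctly identified the one real technical point, namely that the truncations of the Laurent expansion have coefficients in $\F_q(P)\not\subset F$, so term-by-term application of $\C$ requires extending it to $\widehat{F}_P$; your continuity argument does close this, but it is worth observing that the uniform bound you invoke can be obtained without the monomial formula (so there is no circularity): writing $h=\sum_j f_j^p t^j$, the valuations $p\,v_P(f_j)+j$ are pairwise distinct modulo $p$, hence no cancellation occurs and $v_P(h)=\min_j\bigl(p\,v_P(f_j)+j\bigr)$. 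This observation alone already yields (i), (ii) and even (iii) directly on rational differentials (for (iii), $v_P(h)=-1\equiv p-1 \bmod p$ forces the minimum to be attained at $j=p-1$ with $v_P(f_{p-1})=-1$); the passage to the completion is genuinely needed only for the residue identity (iv).
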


\begin{prop}[Global Properties of $\C$]\label{glob}
  For all $\omega \in \Omega_{F/\F_q}$ and all $f\in F$,
  \begin{enumerate}[(i)]
  \item\label{lin} $\C(f^p \omega)=f \C(\omega)$;
  \item $\C(\omega)=0 \ \Longleftrightarrow \ \exists h\in F,\ \omega=dh$;
  \item\label{log} $\C(\omega)=\omega \ \Longleftrightarrow \ \exists h\in F,\ \omega=\frac{dh}{h}$.
  \end{enumerate}
\end{prop}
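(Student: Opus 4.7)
My plan is to treat the three statements separately, isolating the converse in (iii) as the genuine difficulty; the other items all reduce to direct computations in the $F^p$-basis $\{1, x, \ldots, x^{p-1}\}$ of $F$ (which exists because $x$ is separating).

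For (i), writing $\omega = (f_0^p + f_1^p x + \cdots + f_{p-1}^p x^{p-1})\, dx$, the product $f^p \omega$ expands in the same basis as $((ff_0)^p + \cdots + (ff_{p-1})^p x^{p-1})\, dx$, and reading off the coefficient of $x^{p-1}\, dx$ gives the claim. For the forward direction of (ii), writing $h = \sum_{i=0}^{p-1} h_i^p x^i$ and using $d(h_i^p) = 0$ yields $dh = \sum_{i=1}^{p-1} i\, h_i^p x^{i-1}\, dx$, whose $x^{p-1}\, dx$-coefficient vanishes. Conversely, if $\C(\omega) = 0$, then $\omega = \sum_{i=0}^{p-2} f_i^p x^i\, dx$, and
\[
h := \sum_{i=0}^{p-2} \frac{f_i^p}{i+1}\, x^{i+1},
\]
which makes sense since $i+1 \ne 0$ in $\F_p$ for $0 \le i \le p-2$, satisfies $dh = \omega$. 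For the forward direction of (iii), assume $dh \ne 0$ (otherwise $dh/h = 0$ is trivially $\C$-fixed), so $h$ is separating; expanding $1/h = \sum_{i=0}^{p-1} a_i^p h^i$ in the $F^p$-basis $\{1, h, \ldots, h^{p-1}\}$, multiplying by $h$ and rewriting $a_{p-1}^p h^p$ as $(a_{p-1} h)^p$, then comparing with $1 = 1^p$ in the same basis, forces $a_{p-1} h = 1$. Hence $\C(dh/h) = a_{p-1}\, dh = dh/h$.

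The main obstacle is the converse of (iii). Proposition~\ref{local} already rules out the bad poles: a $\C$-fixed $\omega$ cannot satisfy $v_P(\omega) \le -2$ at any place (else $v_P(\C\omega) > v_P(\omega)$ contradicts $\C\omega = \omega$), and $\res_P(\omega) = \res_P(\omega)^{1/p}$ forces every residue into $\F_p$. I would then attack the globalisation via the logarithmic derivative $F^\times \to \Omega_{F/\F_q}$, $h \mapsto dh/h$, whose image lies in $\ker(\C - \mathrm{id})$ by the forward direction. Establishing equality is an $\F_p$-dimension count: using the Cartier isomorphism, or equivalently the Frobenius-equivariant form of Serre duality between $H^0(X, \Omega_X)$ and $H^1(X, \O_X)$, one identifies the holomorphic $\C$-fixed differentials with the Frobenius-stable part of $H^1(X, \O_X)$ and checks that it matches the image of $F^\times \to \Omega_{F/\F_q}$ after a pole-by-pole reduction. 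I expect this cohomological matching to be the delicate step, since a purely local analysis only determines which Laurent coefficients of $\omega$ satisfy the fixed-point equation, not that these local solutions descend from a single global rational function.
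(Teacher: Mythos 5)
The paper gives no proof of this proposition; it is quoted from the literature (Cartier, Seshadri, Tate--Vl\u{a}du\c{t}--Nogin). Measured as a self-contained proof, your treatment of (i), (ii) and the forward implication of (iii) is correct and complete: the computations in the $F^p$-basis $\{1,x,\ldots,x^{p-1}\}$, the explicit antiderivative for the converse of (ii), and the clean identification $a_{p-1}=1/h$ by comparing the two expansions of $1$ are exactly the standard elementary arguments. (These are also the only parts of the proposition the paper actually uses: Proposition \ref{glob}(\ref{lin}) in Corollary \ref{cor:added}, and $\C(dz/z)=dz/z$ in Theorem \ref{Vanish}.)

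The converse of (iii), however, is left as a programme rather than a proof, and the programme as sketched does not close. A Hasse--Witt / Serre-duality dimension count at best identifies the $\C$-fixed \emph{holomorphic} differentials with the $p$-torsion of the Jacobian, whereas the statement concerns arbitrary rational differentials; the ``pole-by-pole reduction'' you defer is precisely where the content lies. Concretely: the local analysis gives you simple poles with residues $n_P\in\F_p$, and to subtract off a logarithmic form with the same polar part you must produce $h\in F^\times$ with $v_P(h)\equiv n_P \pmod p$ at the prescribed places, i.e.\ you must show the class of $\sum n_P P$ dies in $\mathrm{Cl}(X)\otimes\Z/p$ --- a global obstruction that is not visible in any Laurent expansion and is nontrivial over a non-closed base field. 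Even after that reduction, the holomorphic case still requires proving that every $\C$-fixed regular differential is logarithmic (the $d\log$ identification of $H^0(X,\Omega_X)^{\C}$ with $\mu_p$-torsors, not merely an equality of dimensions over $\overline{\F}_q$). So there is a genuine gap in the one direction that carries the real difficulty; everything else in your write-up is sound.
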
 

% \begin{rem}
%   Item (\ref{lin}) in the above proposition entails that $\C$ is $\F_p$--linear but not $\F_q$--linear (it is only $\F_q$--semi-linear). If $q=p^\ell$, then $\C^\ell$ is $\F_q$--linear.
% \end{rem}

% \begin{prop}\label{prop:Valuationq}
% Let $\omega \in \Omega_{F/\F_q}$ and $P$ be a place of $F$.
% Then .  
% \end{prop}

% \begin{proof}
% \cite[Exercise 2.2.53]{TVN}.
% \end{proof}

\begin{nota}
  From now on, for $q=p^\ell$, we denote by $\C_q$ the $\ell$ times iterated Cartier operator $\C_q:=\C^\ell$. This map is $\F_q$--linear.
Replacing $p$ by $q$, Propositions \ref{local}(\ref{posit}--\ref{resi}) and \ref{glob}(\ref{lin}) extend naturally to $\C_q$.
\end{nota}

\begin{cor}\label{cor:added}
 Let $\omega \in \Omega_{F/\F_q}$ and $P$ be a place of $F$ . Then
$$
v_P (\C_q (\omega)) \geq 
\left\lfloor \frac{v_P (\omega)}{q} \right\rfloor
$$
and the above inequality holds even if $v_P (\omega)$ is negative.  
\end{cor}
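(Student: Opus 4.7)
The plan is to reduce the general case to the already-known case of nonnegative valuation (Proposition \ref{local}(\ref{posit})) by factoring out the appropriate power of a local uniformizer and invoking the semi-linearity of $\C_q$. The essential observation is that iterating Proposition \ref{glob}(\ref{lin}) a total of $\ell$ times (where $q=p^\ell$) yields
$$
\C_q(f^q \omega) = f\,\C_q(\omega)
$$
for every $f \in F$, with \emph{no} restriction on $v_P(f)$. This is the property that allows us to handle negative valuations.

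To carry this out, set $m := \lfloor v_P(\omega)/q \rfloor$, so that $v_P(\omega) - qm \in \{0,1,\ldots,q-1\}$. Choose a local parameter $t$ at $P$ and put $\omega' := t^{-qm}\omega$. Then $\omega = (t^m)^q\,\omega'$ and $v_P(\omega') = v_P(\omega) - qm \geq 0$. Applying the semi-linearity recalled above with $f = t^m$ gives
$$
\C_q(\omega) = t^m\,\C_q(\omega').
$$
Since $v_P(\omega') \geq 0$, the extension of Proposition \ref{local}(\ref{posit}) to $\C_q$ yields $v_P(\C_q(\omega')) \geq 0$. Therefore
$$
v_P(\C_q(\omega)) \;=\; m + v_P(\C_q(\omega')) \;\geq\; m \;=\; \left\lfloor \frac{v_P(\omega)}{q} \right\rfloor,
$$
which is the desired bound, valid without any sign assumption on $v_P(\omega)$.

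The only point requiring a moment of care is that $t^m$ need not lie in $\mathcal{O}_{X,P}$ when $m<0$; but the global identity $\C(h^p\eta) = h\,\C(\eta)$ is stated for arbitrary $h \in F$, so the factorisation $\omega = (t^m)^q\omega'$ and its image under $\C_q$ make perfect sense. Beyond that, the argument is essentially a one-line bookkeeping computation, and I do not expect any genuine obstacle.
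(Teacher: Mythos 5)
Your argument is correct and is essentially identical to the paper's own proof: both factor $\omega = z^{bq}\mu$ with $b=\lfloor v_P(\omega)/q\rfloor$ and $v_P(\mu)\geq 0$, then apply the semi-linearity $\C_q(f^q\omega)=f\,\C_q(\omega)$ together with Proposition \ref{local}(\ref{posit}). Your extra remark about $t^m$ not lying in $\mathcal{O}_{X,P}$ for $m<0$ is a fair point of care, and the paper handles it the same way, by invoking the global identity for arbitrary $f\in F$.
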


\begin{proof}
Set $s := v_P (\omega)$ and let $b,r$ be such that $s = bq + r$ with $q > r \geq 0$. Clearly,
$b = \lfloor s/q \rfloor$. Let $z$ be a uniformising parameter at $P$ . There exists $\mu \in \Omega_{F/\F_q}$ with
$v_P (\mu) \geq 0$ such that $\omega = z^{bq} \mu$. Then, from Proposition \ref{glob}(\ref{lin}), we have $\C_q (\omega) = z^b \C_q (\mu)$
and, from Proposition \ref{local}(\ref{posit}), we have $v_P (\C_q (\mu)) \geq 0$. Thus, $v_P (\C_q (\omega)) \geq b$, which con-
cludes the proof.  
\end{proof}

\begin{cor}
Let $H$ be a (possibly non-positive) divisor on $X$ and $H_1$ be another divi-
sor such that $H \geq q H_1$ . Then, for all $\omega \in \Omega (H)$, we have $\C_q (\omega) \in \Omega (H_1)$.  
\end{cor}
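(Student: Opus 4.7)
The plan is to reduce the statement to a valuation-by-valuation check and invoke the preceding corollary. Unfolding the definitions, $\omega \in \Omega(H)$ means that $v_P(\omega) \geq v_P(H)$ for every place $P$ of $F$, while the desired conclusion $\C_q(\omega) \in \Omega(H_1)$ amounts to $v_P(\C_q(\omega)) \geq v_P(H_1)$ for every $P$. So the task is purely local: fix an arbitrary place $P$ and bound $v_P(\C_q(\omega))$ from below.

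For this, I would apply Corollary \ref{cor:added} directly to get
\[
v_P(\C_q(\omega)) \;\geq\; \left\lfloor \frac{v_P(\omega)}{q} \right\rfloor,
\]
which holds regardless of the sign of $v_P(\omega)$. The hypothesis $\omega \in \Omega(H)$ then gives $v_P(\omega) \geq v_P(H)$, and the assumption $H \geq q H_1$ gives $v_P(H) \geq q\, v_P(H_1)$. Chaining these and using the fact that $v_P(H_1)$ is an integer to remove the floor yields
\[
v_P(\C_q(\omega)) \;\geq\; \left\lfloor \frac{v_P(H)}{q} \right\rfloor \;\geq\; \left\lfloor \frac{q\, v_P(H_1)}{q} \right\rfloor \;=\; v_P(H_1).
\]

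Since this holds for every place $P$, we conclude $\C_q(\omega) \in \Omega(H_1)$, completing the proof. There is no real obstacle here: once Corollary \ref{cor:added} is available, the statement is essentially immediate, the only point requiring a moment's attention being that the integrality of $v_P(H_1)$ lets us discard the floor in the last step. The statement should be read as the global, divisor-level reformulation of the local inequality proved in the previous corollary.
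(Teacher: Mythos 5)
Your proof is correct and is exactly the argument the paper intends: the paper's own proof is the one-line remark that the statement is a straightforward consequence of Corollary \ref{cor:added}, and your place-by-place chaining of $v_P(\C_q(\omega)) \geq \lfloor v_P(\omega)/q\rfloor \geq \lfloor v_P(H)/q\rfloor \geq v_P(H_1)$ is precisely the routine verification being left to the reader.
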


\begin{proof}
It is a straightforward consequence of Corollary \ref{cor:added}.
\end{proof}

\subsection{The key vanishing lemma}
The following result is crucial in what follows. 

\begin{thm}\label{Vanish}
  Let $\omega \in \Omega_{F/\F_q}$, let $P$ be a place of $F$ and $s$ be a positive integer.
Assume that $\C_q(\omega)=\omega$ and $v_P (\omega)\geq sq-1$ for some positive integer $s$, then $v_P(\omega)\geq sq$.
\end{thm}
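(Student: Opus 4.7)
The plan is to argue by contradiction: suppose $v_P(\omega) = sq - 1$ exactly; I will show this forces $v_P(\C_q(\omega)) = s - 1$, which is incompatible with $\C_q(\omega) = \omega$ as soon as $s \geq 1$ and $q \geq 2$. Note that Corollary \ref{cor:added} alone is insufficient here: it only yields $v_P(\C_q(\omega)) \geq s - 1$, which is consistent with $v_P(\C_q(\omega)) = sq - 1$, so one needs a sharper local analysis.

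The heart of the argument is the following local computation, to be applied with a uniformiser $z$ at $P$ that is also a separating element for $F/\F_q$ (any uniformiser works since $\F_q$ is perfect). Claim: for any $\mu \in \Omega_{F/\F_q}$ with $v_P(\mu) = Mp - 1$, where $M$ is a positive integer, one has $v_P(\C(\mu)) = M - 1$. Indeed, writing $\mu = f\, dz$ and decomposing
$$f = f_0^p + f_1^p z + \cdots + f_{p-1}^p z^{p-1},$$
the valuations $v_P(f_r^p z^r) = p\, v_P(f_r) + r$ lie in pairwise distinct congruence classes modulo $p$, so $v_P(f) = \min_r (p\, v_P(f_r) + r)$. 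Since $Mp - 1 \equiv p - 1 \pmod{p}$, this minimum must be attained at $r = p - 1$, giving $p\, v_P(f_{p-1}) + (p-1) = Mp - 1$, hence $v_P(f_{p-1}) = M - 1$. As $\C(\mu) = f_{p-1}\, dz$, the claim follows.

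Writing $sq - 1 = (sp^{\ell-1})\, p - 1$ and applying the local formula iteratively, I obtain $v_P(\C^k(\omega)) = sp^{\ell-k} - 1$ for each $0 \leq k \leq \ell$; at each step the hypothesis of the claim is met with $M = sp^{\ell-k-1} \geq 1$. In particular $v_P(\C_q(\omega)) = v_P(\C^\ell(\omega)) = s - 1$. But $\C_q(\omega) = \omega$ implies $v_P(\C_q(\omega)) = sq - 1$, so $s - 1 = sq - 1$, i.e.\ $s(q - 1) = 0$, contradicting $s \geq 1$ and $q \geq 2$.

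The main obstacle is the set-up of the local analysis, specifically ensuring that one may take the separating element equal to a uniformiser at $P$ (so that the definition of $\C$ translates directly into a decomposition of Laurent expansions at $P$). Once this is in place, the congruence argument modulo $p$ and the $\ell$-fold iteration are routine.
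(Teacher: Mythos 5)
Your proof is correct, and it reaches the same contradiction as the paper ($\C_q$ would send valuation $sq-1$ to valuation $s-1$), but by a genuinely different local mechanism. The paper isolates the leading term of $\omega$ at $P$ by lifting a residue: writing $\omega=z^{sq-1}\mu$ with $v_P(\mu)\geq 0$, it sets $\alpha:=\res_P(z^{-1}\mu)$, uses perfectness of the residue field to choose $a$ with $a^q\equiv\alpha \bmod \mathfrak{m}_{X,P}$, decomposes $\mu=a^q\,dz+z\eta$, and applies $\C_q$ in a single shot through the semilinearity $\C_q(f^q\omega)=f\,\C_q(\omega)$ and the identity $\C_q(dz/z)=dz/z$, concluding that $\C_q(\omega)=az^{s-1}dz+z^s\C_q(\eta)$ forces $v_P(a)>0$. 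You instead unpack the very definition of $\C$ with respect to a separating uniformiser and exploit that the terms $f_r^pz^r$ carry valuations in pairwise distinct classes modulo $p$, so a valuation $\equiv -1 \pmod p$ must sit in the $r=p-1$ slot; iterating $\ell$ times yields the exact equality $v_P(\C_q(\omega))=s-1$ when $v_P(\omega)=sq-1$. Your route is more elementary (no residue lifting, no logarithmic differentials, only the definition of $\C$ and the ultrametric inequality) and actually proves a sharper reusable fact, namely the exact valuation drop $v_P(\C(\mu))=M-1$ whenever $v_P(\mu)=Mp-1$, which refines Corollary \ref{cor:added} in that congruence class. The price is the need to justify that a uniformiser at $P$ may serve as the separating element in the definition of $\C$ (true over a perfect constant field, as you note) and the bookkeeping of the $\ell$-fold iteration, whereas the paper's argument treats $\C_q$ directly and leans on the coordinate-free properties of Propositions \ref{local} and \ref{glob}. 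Both proofs are complete; yours could be stated as a standalone strengthening of Corollary \ref{cor:added}.
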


\begin{proof}
  Let $z$ be a uniformising parameter at $P$.
%  There exists a form $\mu$ regular at $P$ such that 
% $$
% \omega=a z^{sq-1}dz+z^{sq}\mu= a z^{sq} ( \frac{dz}{z} + \mu), 
% $$
% where $a$ is in the residue field $\F_q(P)$ of $P$.
The differential form $\omega$ is of the form
$\omega = z^{sq-1}\mu$, where $v_P (\mu)\geq 0$.
Set $\alpha :=\res_P (z^{-1}\mu) \in \F_q(P)$.
%If $\alpha = 0$, then $v_P (\mu)\geq 1$ and the result is proved. Thus, one can assume that $\alpha \neq 0$.
Since the residue field $\F_q (P)$ is perfect, $\alpha ^{1/q}$ is also an element of $\F_q (P)$. Let $a\in \O_{X,P}$ be a function such that $a \equiv \alpha^{1/q} \mod \mathfrak{m}_{X,P}$.
Then, $v_P(\mu-a^q dz)\geq 1$ and hence $\mu=a^q dz + z\eta$ with $v_P (\eta) \geq 0$.
Therefore, we have $\omega = z^{sq-1} (a^q dz + z\eta)$.
Applying $\C_q$ and using Propositions \ref{local} and \ref{glob}, we get
\begin{eqnarray*}
\C_q (\omega) & = & az^s \C_q \left( \frac{dz}{z} \right) + z^s \C_q (\eta)\\
              & = & az^{s-1}dz + z^s \C_q (\eta).
\end{eqnarray*}
Since $v_P (\eta)\geq 0$, from Proposition \ref{local}(\ref{posit}), we have $v_P(\C_q(\eta))\geq 0$.
In addition, since $s>0$ we have $s-1<sq-1$ and the assumption $\C_q (\omega)=\omega$, entails
$v_P (a) > 0$, that is $v_P (\omega)=v_P (z^{sq-1}(a^q dz+ z\eta))\geq sq$.
\end{proof}

% \begin{rem}
%   When the base field is $\F_2$, using Proposition \ref{glob}(\ref{log}), the above statement becomes: \emph{If a logarithmic differential over $\F_2$ vanishes at some place, then it vanishes with multiplicity at least $2$.}
% \end{rem}

\section{Codes defined using the Cartier operator}\label{secCar}

In this section, we introduce a new class of codes which turn out to be a natural geometric generalisation of classical Goppa codes.

\subsection{Motivation}\label{sec:motiv}
To motivate our construction, let us give an alternative proof of Theorem \ref{Thm:Egalite}. In some sense, it is a geometric version of the proof based on the error--locator polynomial \cite[Theorem 12.6]{slmc1}.

\begin{proof}[Proof of Theorem \ref{Thm:Egalite}]
Inclusion ``$\supseteq$" is elementary.
Conversely, using the notations of Example \ref{Goppa=diff}, we know that
$\Gamma_{q}(L, f^{q-1})=C_{\Omega}(D,(q-1)E-P)_{|\F_q},$
where $E$ is the divisor of the zeroes of $f$.
Let $c\in C_{\Omega}(D,(q-1)E-P)_{|\F_q}$. It is the image of a $1$--form $\omega \in \Omega ((q-1)E-P-D)$ by the map $\res_D$ introduced in Definition \ref{defn:CodeGeo}. The residues of $\omega$ at $P_1, \ldots , P_n$ are  in $\F_q$.
Since $\omega$ is regular everywhere but at the $P_i$'s and at $P$, then from the residue formula, $\res_{P}(\omega) \in \F_q$.

From Proposition \ref{local}(\ref{val-1}), the $1$--form $\C_q (\omega)$ has valuation $\geq -1$ at the $P_i$'s and at $P$. From Prop~\ref{local}(\ref{posit}) the form $\C_q (\omega)$ is regular out of the $P_i$'s and $P$. Finally, from Proposition~\ref{local}(\ref{resi}), the $1$--form $\C_q(\omega)$ has the same residues as $\omega$ at these points.
Thus,  $\C_q(\omega)-\omega$ has residues equal to $0$ at all the $P_i$'s and at $P$. Therefore, it is regular everywhere on $\P^1$ and hence is zero.
Consequently, $\C_q(\omega)=\omega$. In addition, since $f$ is squarefree, the divisor $E$ is reduced 
and, using Theorem \ref{Vanish}, we conclude that $\omega \in \Omega ((q-1)E-P-D)$ entails $\omega \in \Omega (qE-P-D)$ and hence $c\in C_{\Omega}(D, qE-P)_{|\F_q}=\Gamma_q (L,f^q)$.  
\end{proof}
 
If one tries to generalise these arguments to a higher genus curve, the proof fails since, nonzero regular differential forms exist. 
Therefore, the point of the following construction is to restrict to differential forms fixing $\C_q$.
%From this construction, we prove Theorem \ref{EgaliteGen}: a generalisation of Theorem \ref{Thm:Egalite} which entails Wirtz's Theorem \ref{Thm:Wirtz_2}.

\subsection{Context}\label{ContCodeCart}
In this section, $X$ is a curve of genus $g$ over $\F_{q^\ell}$ with $\ell \geq 1$. Let $P_1, \ldots , P_n$ a family of $\F_{q^\ell}$--rational points of $X$ and set $D:=P_1+\cdots +P_n$. 
Recall that the function field of $X$ is denoted by $F$ and its space of rational differential forms by $\Omega_{F/\F_{q^\ell}}$. 
Recall also that we denote by $\C_q$ and the map $\C^\ell$ where $q=p^\ell$ and $p$ is the characteristic.

\subsection{The codes} 

\begin{nota}
Let $\varphi$ be an endomorphism of a vector space $E$ and $A\subset E$, we denote by $A^{\varphi}$ the set of elements of $A$ fixed by $\varphi$, that is
$
A^\varphi:=A\cap \ker (\varphi-\textrm{Id}).
$
\end{nota}

\begin{defn}[The code $Car_q (D,G)$]
Let $G$ be a divisor on $X$ whose support avoids that of $D$. The code $Car_{q}(D,G)$ is a code over $\F_q$ defined as the image of the map.
$$
\res_D: \left\{
\begin{array}{ccc}
\Omega (G-D)^{\C_q} & \longrightarrow & \F_q^n \\
\omega & \longmapsto &
(\res_{P_1}(\omega), \ldots , \res_{P_n}(\omega))  
\end{array} \right. \cdot
$$
\end{defn}

  Even if $\Omega (G-D)$ is defined over $\F_{q^{\ell}}$, the code is actually defined over $\F_q$ because of Proposition \ref{local}(\ref{resi}).
This observation has in particular the following consequence.

\begin{prop}\label{prop:subf}
  The code $Car_q (D,G)$ is a subcode of $C_{\Omega}(D,G)_{|\F_q}$.
\end{prop}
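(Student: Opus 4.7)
The plan is to unpack both codes in terms of the same residue map $\res_D$ and show that the fixedness condition $\C_q(\omega)=\omega$ forces the residues into $\F_q$. By definition, $Car_q(D,G)$ is the image of $\Omega(G-D)^{\C_q} \subseteq \Omega(G-D)$ under $\res_D$, while $C_\Omega(D,G)$ is the image of the whole $\Omega(G-D)$; so the inclusion $Car_q(D,G) \subseteq C_\Omega(D,G)$ is immediate from the definitions, and the only real content is to verify that the image lies inside $\F_q^n$ rather than merely in $\F_{q^\ell}^n$.

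For this, I would pick an arbitrary $c = \res_D(\omega) \in Car_q(D,G)$ with $\omega \in \Omega(G-D)^{\C_q}$, and at each rational point $P_i$ use Proposition \ref{local}(\ref{resi}) iterated $\ell$ times. A single application gives $\res_{P_i}(\C(\omega)) = \res_{P_i}(\omega)^{1/p}$, so $\ell$ iterations (keeping in mind that $\C_q = \C^\ell$ and $q=p^\ell$) yield
\[
\res_{P_i}(\omega) = \res_{P_i}(\C_q(\omega)) = \res_{P_i}(\omega)^{1/q}.
\]
Raising both sides to the $q$-th power gives $\res_{P_i}(\omega)^q = \res_{P_i}(\omega)$, so each coordinate of $c$ lies in $\F_q$. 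Hence $c \in C_\Omega(D,G) \cap \F_q^n = C_\Omega(D,G)_{|\F_q}$, which finishes the argument.

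There is essentially no obstacle here; the statement is really a sanity check that the definition of $Car_q(D,G)$ is compatible with Proposition \ref{local}(\ref{resi}), and the only point of vigilance is not to confuse the $1/p$ appearing for the operator $\C$ with the $1/q$ one gets from its $\ell$-fold iterate $\C_q$.
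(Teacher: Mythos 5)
Your argument is correct and is exactly the one the paper intends: the inclusion $Car_q(D,G)\subseteq C_\Omega(D,G)$ is definitional since you restrict $\res_D$ to the subspace $\Omega(G-D)^{\C_q}$, and the coordinates land in $\F_q$ by the residue property of Proposition \ref{local}(\ref{resi}) applied to $\C_q$ (the paper states this extension explicitly and then records the proposition as an immediate consequence, without a separate written proof). Nothing is missing.
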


In \S \ref{sec:choses}, we give explicit examples where $Car_q (D,G)$ is a proper subcode $C_{\Omega}(D, G)_{\F_q}$

The following theorem is a generalisation of Theorem \ref{Thm:Egalite}.

\begin{thm}\label{EgaliteGen}
Let $X, D$ be as in \S \ref{ContCodeCart} and $G$ be a divisor on $X$ whose support avoids that of $D$. Let $G_U$ be sum of the places $P$ such that $v_P(G) \geq 0$ and $v_P(G)\equiv q-1 \mod q$. Then,
$$
Car_q(D, G)=Car_q(D, G+G_U).
$$
\end{thm}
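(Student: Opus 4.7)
The plan is to prove the stronger statement that the two spaces of fixed differentials coincide, namely
\[
\Omega(G-D)^{\C_q} \;=\; \Omega(G+G_U-D)^{\C_q},
\]
from which the equality of the codes follows immediately upon applying $\res_D$ to both sides. The inclusion ``$\supseteq$'' of these spaces is formal: since $G_U \geq 0$, we have $G+G_U-D \geq G-D$, and the anti-monotonicity of $\Omega(\cdot)$ gives $\Omega(G+G_U-D) \subseteq \Omega(G-D)$, which restricts to the $\C_q$-fixed subspaces.

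For the nontrivial inclusion ``$\subseteq$'', I would pick $\omega \in \Omega(G-D)^{\C_q}$ and verify place by place that $v_P(\omega) \geq v_P(G+G_U-D)$. Outside $\supp(G_U)$ there is nothing to do, since $G+G_U$ and $G$ agree there. For $P \in \supp(G_U)$, the defining conditions give $v_P(G) \geq 0$ and $v_P(G) \equiv q-1 \pmod{q}$, so I may write $v_P(G)=sq-1$; the bound $v_P(G)\geq q-1>0$ forces $s\geq 1$. The disjointness of $\supp(G)$ and $\supp(D)$ yields $v_P(D)=0$, and the hypothesis $\omega \in \Omega(G-D)$ then reads $v_P(\omega) \geq sq-1$. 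This is exactly the setting of Theorem \ref{Vanish} with this $s$, and combined with $\C_q(\omega)=\omega$ it upgrades to $v_P(\omega) \geq sq = v_P(G)+1 = v_P(G+G_U-D)$, as required.

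The core step is the invocation of Theorem \ref{Vanish}, and the only real care is the bookkeeping observation that the two conditions defining $G_U$ together force $s\geq 1$, so that Theorem \ref{Vanish} applies. Once this is noted, the argument is a direct generalisation of the genus-$0$ proof of Theorem \ref{Thm:Egalite} given in \S\ref{sec:motiv}, but now strictly simpler: the residue-formula trick used there to certify that $\C_q(\omega)=\omega$ is no longer needed, because the $\C_q$-invariance is built into the definition of $Car_q(D,G)$. I do not foresee any obstacle beyond this.
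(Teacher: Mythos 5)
Your proof is correct and follows exactly the route the paper intends: the paper's own proof is just the one-line remark that the statement is a straightforward consequence of Theorem \ref{Vanish}, and your place-by-place verification (including the observation that $v_P(G)\geq 0$ together with $v_P(G)\equiv q-1 \bmod q$ forces $v_P(G)=sq-1$ with $s\geq 1$, so that Theorem \ref{Vanish} applies) is precisely the bookkeeping that remark leaves implicit. Nothing is missing.
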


\begin{proof}
It is a straightforward consequence of Theorem \ref{Vanish}.
\end{proof}

\begin{cor}
  Let $G_0$ be a reduced positive divisor on $X$ and $E$ be another positive divisor. Assume that $G_0, E$ and $D$ have pairwise disjoint supports, then
$$
Car_q (D, (q-1)G_0-E)=Car_q (D, qG_0-E).
$$
\end{cor}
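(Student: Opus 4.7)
The plan is to derive this corollary as a direct specialization of Theorem \ref{EgaliteGen} applied to the divisor $G := (q-1) G_0 - E$. All the work amounts to computing the divisor $G_U$ associated with this choice of $G$ and checking that $G_U = G_0$, so that $G + G_U = q G_0 - E$; the equality of codes then follows immediately.

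First I would check the hypotheses: $G_0, E, D$ have pairwise disjoint supports, so in particular the support of $G = (q-1)G_0 - E$ is contained in $\supp(G_0) \cup \supp(E)$, which is disjoint from $\supp(D)$, so Theorem \ref{EgaliteGen} applies. Next I would compute $v_P(G)$ place by place:
\begin{itemize}
\item If $P \in \supp(G_0)$, the disjoint support assumption gives $v_P(E) = 0$, and since $G_0$ is reduced, $v_P(G_0) = 1$; thus $v_P(G) = q-1 \geq 0$ and $v_P(G) \equiv q-1 \pmod q$, so $P$ contributes to $G_U$.
\item If $P \in \supp(E)$, then $v_P(G_0) = 0$ and $v_P(G) = -v_P(E) < 0$, so the condition $v_P(G) \geq 0$ fails.
\item For all other places, $v_P(G) = 0$, and $0 \not\equiv q-1 \pmod q$ (since $q \geq 2$), so $P$ does not contribute.
\end{itemize}
Therefore $G_U = G_0$.

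Finally, applying Theorem \ref{EgaliteGen} I would conclude
\[
Car_q(D, (q-1) G_0 - E) = Car_q(D, (q-1) G_0 - E + G_0) = Car_q(D, q G_0 - E),
\]
which is the claim. There is no real obstacle: the only subtlety is in the bookkeeping of valuations, especially remembering that the disjointness of $\supp(G_0)$ and $\supp(E)$ is what forces $v_P(G) = q-1$ (and not something smaller) at points of $G_0$, and that the reducedness of $G_0$ is what ensures this valuation is \emph{exactly} $q-1$ rather than a larger multiple.
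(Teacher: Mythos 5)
Your proof is correct and matches the paper's (implicit) argument: the corollary is intended as a direct application of Theorem \ref{EgaliteGen} to $G=(q-1)G_0-E$, and your verification that $G_U=G_0$ under the reducedness and disjoint-support hypotheses is exactly the required bookkeeping.
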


\begin{rem}
Compared to Wirtz's Theorem \ref{Thm:Wirtz_2}, Theorem \ref{EgaliteGen} holds for all divisor $G$ without any condition on its degree, while there exist divisors $G$ such that $C_{\Omega}(D,G+G_U)_{|\F_q} \varsubsetneq C_{\Omega}(D, G)_{|\F_q}$ (see \S \ref{sec:trucs_et_machins}). For this reason, our new construction seems to be a more natural geometric generalisation of classical Goppa codes than subfield subcodes of AG codes.
\end{rem}

\section{Comparing the two constructions}\label{sec:compare}

\begin{thm}\label{thm:codim}
Let $X, D, G$ be as in Definition \ref{defn:CodeGeo} and $G_1$ be a divisor such that $G\geq qG_1$ and $G\geq G_1$. Then,
$$
\dim_{\F_q}  \raisebox{.5ex}{\ensuremath{C_{\Omega} (D, G)_{|\F_q}}} \bign/
\! \raisebox{-.65ex}{\ensuremath{Car_q (D, G)}} \leq \ell h^1 (G_1).
$$ 
In particular, 
$h^1(G_1)=0 \ \Longrightarrow \  C_{\Omega} (D, G)_{|\F_q} = Car_q (D, G).$
\end{thm}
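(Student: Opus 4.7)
The plan is to realize $C_{\Omega}(D,G)_{|\F_q}/Car_q(D,G)$ as an $\F_q$-subquotient of $\Omega(G_1)$ via the ``defect'' map $\omega \mapsto \omega - \C_q(\omega)$, and then invoke $\dim_{\F_q}\Omega(G_1) = \ell h^1(G_1)$.

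First I would introduce the $\F_q$-subspace
$V := \{\omega \in \Omega(G-D) : \res_{P_i}(\omega) \in \F_q \text{ for all } i\}$
of $\Omega(G-D)$, so that $\res_D(V) = C_\Omega(D,G)_{|\F_q}$ by definition of the subfield subcode. Proposition \ref{local}(\ref{resi}) gives $\res_{P_i}(\omega) = \res_{P_i}(\omega)^{1/q}$ for any $\C_q$-fixed form, whence $\Omega(G-D)^{\C_q} \subseteq V$; passing to residues yields a surjection
$V/\Omega(G-D)^{\C_q} \twoheadrightarrow C_\Omega(D,G)_{|\F_q}/Car_q(D,G)$.
It therefore suffices to bound $\dim_{\F_q} V/\Omega(G-D)^{\C_q}$ by $\ell h^1(G_1)$.

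Next I would consider the $\F_q$-linear map
$\psi \colon V \to \Omega_{F/\F_{q^\ell}}$, $\psi(\omega) := \omega - \C_q(\omega)$,
and verify that $\psi(V) \subseteq \Omega(G_1)$ — this is the core step where both hypotheses enter. At any place $P \notin \supp D$: the bound $G \geq G_1$ gives $v_P(\omega) \geq v_P(G) \geq v_P(G_1)$, while Corollary \ref{cor:added} combined with $G \geq qG_1$ gives $v_P(\C_q(\omega)) \geq \lfloor v_P(G)/q \rfloor \geq v_P(G_1)$; so $v_P(\psi(\omega)) \geq v_P(G_1)$. At each $P_i$: Corollary \ref{cor:added} yields $v_{P_i}(\C_q(\omega)) \geq \lfloor -1/q \rfloor = -1$, while Proposition \ref{local}(\ref{resi}) together with $\res_{P_i}(\omega) \in \F_q$ gives $\res_{P_i}(\C_q(\omega)) = \res_{P_i}(\omega)^{1/q} = \res_{P_i}(\omega)$, so the residue of $\psi(\omega)$ at $P_i$ vanishes, which upgrades the pole bound to $v_{P_i}(\psi(\omega)) \geq 0 \geq v_{P_i}(G_1)$ (the last inequality from $v_{P_i}(G) = 0$ and $G \geq qG_1$).

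By construction, $\ker \psi = V \cap \ker(\C_q - \textrm{Id}) = \Omega(G-D)^{\C_q}$, so $\psi$ factors as an injection $V/\Omega(G-D)^{\C_q} \hookrightarrow \Omega(G_1)$. Since $\Omega(G_1)$ is an $\F_{q^\ell}$-vector space of dimension $h^1(G_1)$, its $\F_q$-dimension equals $\ell h^1(G_1)$, which combined with the surjection above yields the claimed bound. When $h^1(G_1) = 0$, the map $\psi$ vanishes identically, so every element of $V$ is $\C_q$-fixed and the two codes coincide. The main obstacle I expect is the valuation/residue bookkeeping at the points of $D$: an element of $V$ may have a simple pole at each $P_i$, so a priori $\psi(\omega)$ would only land in the larger space $\Omega(G_1 - D)$; the residue-invariance of $\C_q$ is precisely what forces the cancellation needed to land in $\Omega(G_1)$, and both assumptions $G \geq qG_1$ and $G \geq G_1$ are genuinely used (the former to control $\C_q(\omega)$, the latter to control $\omega$ itself when $G_1$ has negative components).
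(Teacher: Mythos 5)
Your proof is correct and takes essentially the same route as the paper: the paper derives the bound by applying the Snake Lemma to the morphism of exact sequences $0 \to \Omega(G) \to \Omega(G-D) \to C_{\Omega}(D,G) \to 0$ and $0 \to \Omega(G_1) \to \Omega(G_1-D) \to C_{\Omega}(D,G_1) \to 0$ with vertical maps $\C_q-\textrm{Id}$, and the resulting connecting homomorphism is precisely your map $\psi(\omega)=\omega-\C_q(\omega)$ (up to sign), which lands in $\Omega(G_1)$ for exactly the reasons you give (Corollary \ref{cor:added} using $G\geq qG_1$ and $G\geq G_1$, plus residue cancellation at the $P_i$ via Proposition \ref{local}(\ref{resi})). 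Unwinding the Snake Lemma by hand as you do is a valid and slightly more elementary presentation of the identical argument.
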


\begin{proof}
First, notice that $G-D \geq q(G_1-D)$ and hence, from Corollary \ref{cor:added}, for all $\omega \in \Omega (G-D)$, we have $\C_q (\omega) \in \Omega (G_1-D)$. Corollary \ref{cor:added} also asserts that if $\omega \in \Omega (G)$, then $\C_q (\omega)\in \Omega (G_1)$.

Now, consider the following morphism of exact sequences:
$$
\xymatrix{
\relax 0 \ar[r] & \Omega (G) \ar[r] \ar[d]^{\C_q -Id} & \Omega (G-D) \ar[r]^{\res_D} \ar[d]^{\C_q - Id} & C_{\Omega}(D,G) \ar[r] \ar[d]^{\phi^{-1}-Id} & 0\\
0 \ar[r] & \Omega(G_1) \ar[r] & \Omega(G_1-D) \ar[r]^{\res_D} & C_{\Omega}(D, G_1) \ar[r] & 0
},
$$
where $\res_D$ is the map introduced in Definition \ref{defn:CodeGeo} and $\phi: \F_{q^{\ell}}^n \rightarrow \F_{q^{\ell}}^n$ is the coordinate-wise Frobenius map $(c_1, \ldots, c_n) \mapsto (c_1^q, \ldots , c_n^q)$. The left-hand square is clearly commutative and the commutativity of the right--hand one is an easy consequence of Proposition \ref{local}(\ref{resi}).

From the Snake Lemma, we get the exact sequence
$$
0\ \longrightarrow \Omega (G)^{\C_q} \ \longrightarrow \ \Omega (G-D)^{\C_q}\ \longrightarrow \ C_{\Omega}(D, G)_{|\F_q} \longrightarrow 
 \raisebox{.5ex}{\ensuremath{ \Omega (G_1) }} \bign/
\! \raisebox{-.65ex}{\ensuremath{ (\C_q - Id)\Omega(G) }}\ ,
$$
which naturally entails
$$
0 \ \longrightarrow \ Car_q (D,G) \ \longrightarrow \ C_{\Omega} (D, G)_{|\F_q} \ \longrightarrow \ 
 \raisebox{.5ex}{\ensuremath{ \Omega (G_1) }} \bign/
\! \raisebox{-.65ex}{\ensuremath{ (\C_q - Id)\Omega(G) }}.
$$
This yields the result.
\end{proof}

% \begin{rem}
%   It is worth noting that the cases where $Car_q (D,G)$ and $C_{\Omega}(D,G )_{|\F_q}$ may differ are the most interesting ones. Indeed, the codes may differ when the degree of $G$ is small, hence when the dimension of $C_{\Omega} (D,G)$ is large. Since the subfield subcode operation makes dramatically collapse the dimension, to get $\F_q$--codes with reasonable dimensions one needs to choose a divisor $G$ of low degree. 
% \end{rem}

\begin{rem}\label{rem:alter_proof}
Wirtz's Theorem \ref{Thm:Wirtz_2} is a straightforward consequence of the previous results. Indeed, the condition $\deg (G_1)>2g-2$ in the statement asserts that $h^1 (G_1)=0$ and hence, from Theorem \ref{thm:codim}, we have $Car_q(D,G)=C_{\Omega}(D, G)_{|\F_q}$. Applying Theorem \ref{EgaliteGen} to the Cartier Codes, we get Wirtz's result. 
\end{rem}

\section{Parameters of Cartier codes}\label{sec:param}
In this section, we give lower bounds for the parameters of Cartier codes and discuss in particular their dimension by giving two distinct lower bounds. The first one derives from Stichtenoth's Theorem \ref{thm:dim_sticht} together with Theorem \ref{thm:codim}. The second one is direct and is proved without using subfield subcodes of AG codes.

Since we always have $Car_q (D,G) \subset C_{\Omega}(D, G)_{|\F_q}$ (Proposition \ref{prop:subf}), this second lower bound holds for subfield subcodes of AG codes and improves in some situations Stichtenoth's bound.

\subsection{Minimum distance}\label{sec:mindist}
In what follows, we denote by $d(C)$ the minimum distance of a code $C$.
A natural lower bound for the minimum distance of the codes $Car_q (D,G)$ is given by the inclusion $Car_q (D, G) \subset C_{\Omega} (D,G)$ and the Goppa designed distance. Hence
$$
d(Car_q (D,G)) \geq d(C_{\Omega}(D,G))\geq \deg (G)+2-2g.
$$
Moreover, Theorem \ref{EgaliteGen} improves the minimum distance in some situation. Namely, 
in the context of Theorem \ref{EgaliteGen}, we have
$$
d(Car_q(D,G))\geq \deg(G+G_U)+2-2g.
$$

\subsection{First lower bound for the dimension}

\begin{thm}\label{dimC}
Let $X, D, G$ be as in Definition \ref{defn:CodeGeo}, let $G$ be a divisor on $X$ and $G_1$ be a divisor such that $G\geq qG_1$ and $G \geq G_1$.
Then
\begin{equation}\label{eq:ineqdim1}
\dim_{\F_q} (Car_q (D,G)) \geq \left\{
  \begin{array}{lll}
    n- 1 - \ell (h^0(G)-h^0(G_1)+h^1 (G_1)) & {\rm if} & G\geq 0\\
    n- \ell (h^0(G)-h^0(G_1)+h^1 (G_1)) & {\rm if} & G \ngeq 0
  \end{array}
\right. .
\end{equation}
Moreover, if $h^1 (G) = 0$, then
\begin{equation}\label{eq:ineqdim2}
\dim_{\F_q} (Car_q (D,G)) \geq \left\{
  \begin{array}{lll}
    n- 1 - \ell \deg (G-G_1) & {\rm if} & G\geq 0\\
    n- \ell \deg (G-G_1) & {\rm if} & G \ngeq 0
  \end{array}
\right. .
\end{equation}
\end{thm}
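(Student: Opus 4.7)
The plan is to derive inequality~(\ref{eq:ineqdim1}) by combining two already established results and then deduce~(\ref{eq:ineqdim2}) from~(\ref{eq:ineqdim1}) via Riemann--Roch.

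\medskip

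\textbf{Step 1: Set up the dimension identity.} Since $Car_q(D,G)$ is a subcode of $C_{\Omega}(D,G)_{|\F_q}$ by Proposition~\ref{prop:subf}, we have the trivial identity
\[
\dim_{\F_q}(Car_q(D,G)) = \dim_{\F_q}(C_{\Omega}(D,G)_{|\F_q}) - \dim_{\F_q}\bigl(C_{\Omega}(D,G)_{|\F_q}/Car_q(D,G)\bigr).
\]
The hypotheses $G \geq qG_1$ and $G \geq G_1$ are precisely what is needed to apply both Stichtenoth's Theorem~\ref{thm:dim_sticht} (which only requires $G \geq qG_1$) and Theorem~\ref{thm:codim} (which also needs $G \geq G_1$). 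I would first check this compatibility: note that when $G_1 \geq 0$ the hypothesis $G \geq qG_1$ already implies $G \geq G_1$, but if $G_1$ is allowed to be non-positive, the condition $G \geq G_1$ is genuinely needed.

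\medskip

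\textbf{Step 2: Apply the two input theorems.} Theorem~\ref{thm:dim_sticht} gives a lower bound on the first term of the form $n - \epsilon - \ell(h^0(G) - h^0(G_1))$ where $\epsilon \in \{0,1\}$ depending on whether $G \geq 0$ or $G \ngeq 0$. Theorem~\ref{thm:codim} gives an upper bound on the second term, namely $\ell h^1(G_1)$. Subtracting yields
\[
\dim_{\F_q}(Car_q(D,G)) \geq n - \epsilon - \ell(h^0(G) - h^0(G_1)) - \ell h^1(G_1),
\]
which is exactly~(\ref{eq:ineqdim1}) in both cases $G \geq 0$ and $G \ngeq 0$.

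\medskip

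\textbf{Step 3: Deduce (\ref{eq:ineqdim2}) from (\ref{eq:ineqdim1}) using Riemann--Roch.} Assume now that $h^1(G) = 0$. By Riemann--Roch we have $h^0(G) = \deg(G) - g + 1$, and for $G_1$ the formula $h^0(G_1) - h^1(G_1) = \deg(G_1) - g + 1$. Substituting these into the quantity $h^0(G) - h^0(G_1) + h^1(G_1)$ gives
\[
h^0(G) - h^0(G_1) + h^1(G_1) = (\deg(G) - g + 1) - (\deg(G_1) - g + 1) = \deg(G - G_1).
\]
Plugging this back into~(\ref{eq:ineqdim1}) yields~(\ref{eq:ineqdim2}).

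\medskip

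There is no real obstacle here: once Theorems~\ref{thm:dim_sticht} and~\ref{thm:codim} are available, the proof is essentially bookkeeping. The only small subtlety is the case analysis on whether $G \geq 0$ or not (which is inherited unchanged from Stichtenoth's theorem and produces the $-1$ in the positive case), and verifying that the hypotheses of both input theorems are simultaneously satisfied under the assumptions $G \geq qG_1$ and $G \geq G_1$.
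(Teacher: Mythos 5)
Your proof is correct and follows exactly the route the paper takes: inequality (\ref{eq:ineqdim1}) is obtained by combining Stichtenoth's lower bound (Theorem \ref{thm:dim_sticht}) with the codimension bound of Theorem \ref{thm:codim}, and (\ref{eq:ineqdim2}) then follows from Riemann--Roch exactly as in your Step 3. The paper states this in one line; your write-up just makes the bookkeeping explicit.
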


\begin{proof}
  Inequalities (\ref{eq:ineqdim1}) are a straightforward consequence of Theorems \ref{thm:dim_sticht}  and \ref{thm:codim}. Inequalities (\ref{eq:ineqdim2}) are consequences of (\ref{eq:ineqdim1}) together with the Riemann--Roch Theorem.
\end{proof}

\begin{cor}
  \label{cor:easy}
Let $G_0$ be a reduced positive divisor on $X$ whose support is disjoint from that of $D$ and such that $h^1 (qG_0)=0$. Then the code $Car_q (D, qG_0)$ has parameters satisfying
$$
\begin{array}{ccc}
  k & \geq & n-1-\ell (q-1)\deg (G_0)\\
  d & \geq & q\deg (G_0)+2-2g
\end{array}
$$
\end{cor}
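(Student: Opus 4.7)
The plan is to derive both bounds by direct specialisation of results already established in the paper, taking $G = qG_0$.

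\medskip

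For the dimension, I would apply Theorem \ref{dimC}, specifically inequality (\ref{eq:ineqdim2}), with $G := qG_0$ and $G_1 := G_0$. The hypotheses are immediate: the relation $G \geq qG_1$ holds with equality; the relation $G \geq G_1$ follows from $G_0 \geq 0$ and $q \geq 1$; and $h^1(G) = h^1(qG_0) = 0$ is exactly the vanishing assumption in the statement. Since $G = qG_0 \geq 0$, the first branch of (\ref{eq:ineqdim2}) applies and gives
\[
\dim_{\F_q}(Car_q(D, qG_0)) \;\geq\; n - 1 - \ell\deg(qG_0 - G_0) \;=\; n - 1 - \ell(q-1)\deg(G_0),
\]
which is the stated lower bound on $k$.

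\medskip

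For the minimum distance, I would use Proposition \ref{prop:subf} (or equivalently the inclusion $Car_q(D, G) \subset C_{\Omega}(D, G)$ recalled in \S \ref{sec:mindist}) together with the Goppa designed distance lower bound (\ref{eq:dist_const}). This immediately yields
\[
d(Car_q(D, qG_0)) \;\geq\; d(C_{\Omega}(D, qG_0)) \;\geq\; \deg(qG_0) + 2 - 2g \;=\; q\deg(G_0) + 2 - 2g.
\]
(One can also approach this via the corollary following Theorem \ref{EgaliteGen} applied to the reduced divisor $G_0$, rewriting $Car_q(D, qG_0) = Car_q(D, (q-1)G_0)$, but the above inclusion already suffices.)

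\medskip

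Honestly, there is no real obstacle here: the statement is a bookkeeping consequence of Theorem \ref{dimC} and the designed distance bound, and the hypothesis that $G_0$ is reduced is not even needed for this particular corollary (it would become relevant only if one wanted to invoke Theorem \ref{EgaliteGen} to re-express the code or to sharpen the parameters). The only care required is to choose the auxiliary divisor $G_1 = G_0$ correctly so that $\deg(G - G_1) = (q-1)\deg(G_0)$ matches the desired coefficient $(q-1)$ in the dimension bound.
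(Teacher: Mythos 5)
Your proof is correct, but it takes a different route from the paper's. The paper first invokes Theorem \ref{EgaliteGen} (using that $G_0$ is reduced) to rewrite $Car_q(D,qG_0)=Car_q(D,(q-1)G_0)$, and only then applies Theorem \ref{dimC} to the divisor $(q-1)G_0$; you instead apply inequality (\ref{eq:ineqdim2}) directly to $G=qG_0$ with the auxiliary divisor $G_1=G_0$, which satisfies $G\geq qG_1$ with equality and $G\geq G_1$, and immediately yields $n-1-\ell\deg(qG_0-G_0)=n-1-\ell(q-1)\deg(G_0)$. Your observation that reducedness of $G_0$ is then not needed is accurate, and your route is arguably tidier: applying Theorem \ref{dimC} to $(q-1)G_0$ as the paper does forces the choice $G_1=0$ there (since $(q-1)G_0\geq qG_1$ with $G_0$ reduced leaves no room for a positive $G_1$), and extracting the clean bound $n-1-\ell(q-1)\deg(G_0)$ from that requires $h^1((q-1)G_0)=0$, which does not formally follow from the stated hypothesis $h^1(qG_0)=0$ (the inclusion of divisors goes the wrong way). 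Your direct specialisation sidesteps this entirely. The minimum-distance part is identical in both treatments: the inclusion $Car_q(D,qG_0)\subset C_\Omega(D,qG_0)$ and the designed distance (\ref{eq:dist_const}) give $q\deg(G_0)+2-2g$. What the paper's detour through Theorem \ref{EgaliteGen} \emph{buys} is conceptual emphasis (the corollary is meant to mirror the classical Goppa identity $\Gamma(L,f^{q-1})=\Gamma(L,f^q)$), but for the stated parameter bounds your argument suffices and is self-contained.
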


\begin{proof}
  From Theorem \ref{EgaliteGen}, we have $Car_q (D, qG_0)=Car_q (D, (q-1)G_0)$. Then, apply Theorem \ref{dimC} to $Car_q (D, (q-1)G_0)$ to get the dimension and apply the bounds of \S \ref{sec:mindist} to $Car_q (D,qG_0)$ to get the minimum distance.
\end{proof}

\subsection{Second lower bound for the dimension}

The lower bounds for the dimension of Cartier codes of Theorem \ref{dimC} come from Stichtenoth's estimates for the dimension of subfield subcodes. Here, we state a bound which can be proven directly without using subfield subcodes.

\begin{thm}\label{thm:improve_dim}
Let $X,D, G$ be as in Definition \ref{defn:CodeGeo}. Let $G^+, G^-$ be the two positive divisors with disjoint supports such that $G=G^+-G^-$.
Assume that $G^-$ is reduced and that $G^+, G^-$ and $D$ have pairwise disjoint supports.
Let $s_{G^-}$ be the number of places supporting $G^-$.
Then, 
$$
\dim_{\F_q} Car_q (D,G) \geq n-1+s_{G^-} -\ell \deg (G^+) - h^1 (G).  
$$
\end{thm}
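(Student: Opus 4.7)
The plan is to use the short exact sequence
$$
0 \to \Omega(G)^{\C_q} \to \Omega(G-D)^{\C_q} \xrightarrow{\res_D} Car_q(D,G) \to 0,
$$
so that $\dim_{\F_q} Car_q(D,G) = \dim_{\F_q}\Omega(G-D)^{\C_q} - \dim_{\F_q}\Omega(G)^{\C_q}$, and to attack the two terms independently. For the upper bound on the second term I would first establish a general semilinear-algebra fact: whenever $V$ is a finite-dimensional $\F_{q^\ell}$-subspace of $\Omega_{F/\F_{q^\ell}}$, one has $\dim_{\F_q} V^{\C_q} \leq \dim_{\F_{q^\ell}} V$. This follows from a minimal-counterexample argument: pick a shortest-support $\F_{q^\ell}$-linear dependence among $\C_q$-fixed vectors with some coefficient normalised to $1$; applying $\C_q$ and subtracting either strictly shortens the dependence or forces every coefficient $\lambda$ to satisfy $\lambda^{1/q} = \lambda$, hence $\lambda \in \F_q$, which would contradict the assumed $\F_q$-linear independence. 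Applied to $V = \Omega(G)$ this yields $\dim_{\F_q}\Omega(G)^{\C_q} \leq h^1(G)$.

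For the lower bound on $\dim_{\F_q}\Omega(G-D)^{\C_q}$ the key observation is that $\Omega(G-D) \subset \Omega(-G^- - D)$ (because $G^+ \geq 0$) and that $\C_q$ stabilises $\Omega(-G^- - D)$: every valuation of $-G^- - D$ is at least $-1$ and $\lfloor -1/q \rfloor = -1$, so Corollary~\ref{cor:added} applies. Hence $\Omega(G-D)^{\C_q} = \Omega(G-D) \cap \Omega(-G^- - D)^{\C_q}$, and since the quotient $\Omega(-G^- - D)/\Omega(G-D)$ is parameterised by truncated Taylor expansions up to order $v_P(G^+)-1$ at the places $P \in \supp(G^+)$, it has $\F_{q^\ell}$-dimension $\deg(G^+)$ and hence $\F_q$-dimension $\ell\deg(G^+)$, giving
$$
\dim_{\F_q}\Omega(G-D)^{\C_q} \geq \dim_{\F_q}\Omega(-G^- - D)^{\C_q} - \ell \deg(G^+).
$$

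The hardest step, where the arithmetic of $\C_q$ really enters, is the inequality $\dim_{\F_q}\Omega(-G^- - D)^{\C_q} \geq n + s_{G^-} - 1$. For this I would use the residue map
$$
\res : \Omega(-G^- - D) \to H := \Bigl\{ (\vec a, \vec b) \in \F_{q^\ell}^{n+s_{G^-}} : \sum a_i + \sum b_j = 0 \Bigr\},
$$
which by the residue theorem and a Riemann--Roch dimension count is surjective with kernel $\Omega(0)$, the space of regular differentials. Restricted to the $\C_q$-fixed part the image lies in the $\F_q$-hyperplane $H \cap \F_q^{n+s_{G^-}}$ of dimension $n + s_{G^-} - 1$. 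For $\vec c$ in this target, any lift $\omega_0 \in \Omega(-G^- - D)$ satisfies $\C_q(\omega_0) - \omega_0 \in \Omega(0)$ (Frobenius fixes the $\F_q$-rational residues), and the class of this difference modulo $(\C_q - \mathrm{Id})\Omega(0)$ depends only on $\vec c$; this defines an $\F_q$-linear obstruction $\Psi : H \cap \F_q^{n+s_{G^-}} \to \Omega(0)/(\C_q - \mathrm{Id})\Omega(0)$ whose kernel is precisely the image of $\res|_{\Omega(-G^- - D)^{\C_q}}$. The decisive cancellation is the rank-nullity identity $\dim_{\F_q}\Omega(0)^{\C_q} = \dim_{\F_q}\Omega(0)/(\C_q - \mathrm{Id})\Omega(0)$ for the $\F_q$-linear endomorphism $\C_q$ of the finite-dimensional $\F_q$-space $\Omega(0)$ (stability by Proposition~\ref{local}(\ref{posit})); adding the dimensions of the kernel $\Omega(0)^{\C_q}$ and of the image $\ker\Psi$, the $\dim\Omega(0)^{\C_q}$ contribution cancels and leaves exactly $n + s_{G^-} - 1$. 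Combining the three ingredients produces the announced bound, and the main obstacle is really this genus-cancellation step, which ensures that no residual $g$-term survives in the final estimate.
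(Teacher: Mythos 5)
Your argument is correct and arrives at the paper's bound, but the engine driving the key estimate is genuinely different from the paper's. Both proofs start from the same exact sequence $0\to\Omega(G)^{\C_q}\to\Omega(G-D)^{\C_q}\to Car_q(D,G)\to 0$ and the same bound $\dim_{\F_q}\Omega(G)^{\C_q}\leq h^1(G)$. For the lower bound on $\dim_{\F_q}\Omega(G-D)^{\C_q}$, the paper works with the map $\C_q-\mathrm{Id}:\Omega(G-D)\to\Omega(-G^--D)$, bounds the dimension of its image from above by exhibiting $n-1+s_{G^-}$ \emph{independent} trace-of-residue functionals vanishing on it (independence proved by constructing a dual family $\omega_P\in\Omega(-P-P_n)\setminus\Omega(0)$ via Riemann--Roch), and lets the genus terms cancel between the Riemann--Roch counts for $\Omega(G-D)$ and $\Omega(-G^--D)$. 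You instead first pass to the $\C_q$-stable space $\Omega(-G^--D)$ at a cost of $\ell\deg G^+$, and then compute its fixed part via the residue exact sequence $0\to\Omega(0)\to\Omega(-G^--D)\to H\to 0$: your obstruction map $\Psi$ is the connecting homomorphism of a snake-lemma diagram with vertical maps $\C_q-\mathrm{Id}$ (structurally the same device the paper uses to prove Theorem \ref{thm:codim}), and the genus cancellation is achieved by the rank--nullity identity $\dim\ker(\C_q-\mathrm{Id})|_{\Omega(0)}=\dim\mathrm{coker}(\C_q-\mathrm{Id})|_{\Omega(0)}$ rather than by subtracting two Riemann--Roch formulas. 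Your route buys a cleaner structural explanation of why no $g$-term survives, and it replaces the independence argument for the functionals by the (equivalent, Riemann--Roch-based) surjectivity of the residue map onto the hyperplane $H$. One point to tighten: the places $Q_j$ supporting $G^-$ need not be $\F_{q^\ell}$-rational, so the target of your residue map should be $\F_{q^\ell}^n\times\prod_j\F_{q^\ell}(Q_j)$ (which is why $\deg G^-$ and $s_{G^-}$ must be distinguished, as the paper does); the residue-theorem constraint then involves traces to $\F_{q^\ell}$, but restricted to tuples with entries in $\F_q$ it is still a single $\F_q$-linear condition, so your count $n+s_{G^-}-1$ survives unchanged.
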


\begin{proof}
\noindent {\it Step 1.} The code $Car_q (D, G)$ is the image of the $\F_q$--linear map
$$
\phi:\left\{
  \begin{array}{ccc}
    \Omega(G-D)^{\C_q} & \longrightarrow & \F_q^n\\
    \omega & \longmapsto & (\res_{P_1}(\omega), \ldots , \res_{P_n}(\omega))
  \end{array}
\right. .
$$
The kernel of the map is $\Omega(G)^{\C_q}$ whose $\F_q$--dimension is bounded above by $h^1 (G)$. Indeed, notice that $\Omega (G)\supseteq \Omega (G)^{\C_q} \otimes_{\F_q} \F_{q^{\ell}}$.

Now, let us bound below the dimension of $\Omega (G-D)^{\C_q}$. 

\medbreak

\noindent {\it Step 2.} Obviously, we have $G-D \geq  q(-G^- -D)$. Thus, from Corollary \ref{cor:added}, the following map is well--defined.
\begin{equation}\label{eq:kernel}
\C_q -Id :    \Omega (G-D) \longrightarrow \Omega(-G^--D)
\end{equation}
and the $\F_q$--space, $\Omega (G-D)^{\C_q}$ is its kernel.
From now on, denote by $V$ the image of the above map.
Notice that $V$ is an $\F_q$--subspace of $\Omega (-G^- -D)$ and not an
$\F_{q^{\ell}}$--subspace in general.
We claim that the map (\ref{eq:kernel}) is not surjective and will construct a
proper $\F_q$--subspace of $\Omega (-G^- -D)$ containing $V$.

\medbreak

\noindent {\it Step 3.} Recall that, given a place $P$ of $X$, we denote by $\F_{q^{\ell}} (P)$ the corresponding residue field.
Let $Q_1 , \ldots, Q_{s_{G^-}}$ be the places supporting $G^-$.
Now, consider the $n-1 + s_{G^-}$ following $\F_q$--linear forms on $\Omega (-G^- -D)$:
\begin{equation}
  \label{eq:trace}
 \psi_P : \omega \rightarrow \textrm{Tr}_{\F_{q^{\ell}}(P)/\F_q} (\res_P (\omega)) \quad \textrm{for}\ P\in \{Q_1, \ldots , Q_{s_{G^-}}, P_1, \ldots , P_{n-1}\}.
\end{equation}

Elements of $V$ are of the form $\C_q (\omega) - \omega$ and hence,
from Proposition \ref{local}(\ref{resi}), the traces of their residues
are always zero.
Therefore, the above--described maps $\psi_P$ vanish on $V$.

In addition, the $\F_q$--linear forms $\psi_P$ described in (\ref{eq:trace})
are independent on $\Omega(-G^- -D)$.
Indeed, for all place $P \in \{Q_1 , \ldots ,Q_{s_{G^-}} , P_1 , \ldots , P_{n-1} \}$, Riemann--Roch Theorem asserts that $\Omega(0) \varsubsetneq \Omega(-P - P_n)$.
Then, choose a form $\omega_P \in \Omega (-P-P_n) \setminus \Omega (0)$.
From the residue formula, $\res_P (\omega_P) \neq 0$ for all $P$.
The forms $\omega_{Q_1}, \ldots , \omega_{Q_{s_{G^-}}}, \omega_{P_1}, \ldots ,
\omega_{P_{n-1}}$ are elements of $\Omega (-G^- -D)$ and provide a dual basis
for the $\F_q$--linear forms described in (\ref{eq:trace}),
which yields the independence of these maps.

Finally, $V$ is contained in the intersection of the kernels of
$n - 1 + s_{G^-}$ independent $\F_q$--linear forms on $\Omega (-G^- -D)$ and hence its codimension in this space is at least $n-1+s_{G^-}$.

\medbreak

\noindent {\it Step 4.} From Riemann--Roch Theorem and the previous step, the dimension of the image $V$ of the map (\ref{eq:kernel}) satisfies
\begin{equation}
  \label{eq:dimV}
  \dim_{\F_q} (V) \leq \ell (g+n-1+\deg G^-) - (n-1+s_{G^-}).
\end{equation}
On the other hand, from Riemann--Roch Theorem, we also have
\begin{equation}
  \label{eq:dimOm}
  \dim_{F_q} \Omega (G-D) \geq \ell (n-\deg G + g -1).
\end{equation}
Combining (\ref{eq:dimV}), (\ref{eq:dimOm}) and Step 1, we get the result.
\end{proof}

\begin{rem}
  For $G_1\geq 0$ reduced, $G=qG_1$ and $h^1(G)=0$, then Theorem \ref{thm:improve_dim} gives the same bound as Theorem \ref{dimC}.
\end{rem}

\subsection{When Cartier Codes improve the bounds on the dimension of subfield subcodes of Algebraic Geometry codes}

\begin{cor}\label{cor:impr}
  Let $G_0, G^-$ be two positive reduced divisors on $X$ such that $G_0, G^-$ and $D$ have pairwise disjoint supports and $h^1(G_0-G^-)=0$. Set $G:=qG_0-G^-$ and let $s_{G^-}$ be as in Theorem \ref{thm:improve_dim}. Then, 
$$
\dim_{\F_q} C_{\Omega} (D, qG_0-G^-)_{|\F_q} \geq n-1+s_{G^-} - \ell (q-1)\deg (G_0),
$$
which improves Theorem \ref{thm:dim_sticht} as soon as $s_{G^-}>1$.
\end{cor}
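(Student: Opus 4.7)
The plan is to reduce the claimed bound to an application of Theorem \ref{thm:improve_dim} on the Cartier subcode, and then to separately verify that this bound indeed beats the Stichtenoth estimate.

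First, by Proposition \ref{prop:subf} we have $Car_q(D,G) \subseteq C_{\Omega}(D,G)_{|\F_q}$, so it is enough to establish the stated inequality for $\dim_{\F_q} Car_q(D,qG_0-G^-)$. Since $G_0$ is reduced positive with support disjoint from that of $G^-$ and $D$, the corollary following Theorem \ref{EgaliteGen} (applied with $E=G^-$) gives
$$
Car_q(D, qG_0-G^-) \;=\; Car_q\bigl(D, (q-1)G_0-G^-\bigr).
$$
Setting $G' := (q-1)G_0-G^-$, the positive and negative parts of $G'$ are $(G')^+ = (q-1)G_0$ and $(G')^- = G^-$, these parts being disjoint from $D$ and from each other by hypothesis, and $(G')^-=G^-$ is reduced. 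Hence the hypotheses of Theorem \ref{thm:improve_dim} are satisfied.

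Applying Theorem \ref{thm:improve_dim} to $G'$ yields
$$
\dim_{\F_q} Car_q(D, G') \;\geq\; n-1+s_{G^-} -\ell(q-1)\deg(G_0) - h^1(G').
$$
The remaining ingredient is to show $h^1(G')=0$. Since $q\geq 2$, one has $G' = (q-1)G_0-G^- \geq G_0-G^-$, so $\Omega(G') \subseteq \Omega(G_0-G^-)$, and the hypothesis $h^1(G_0-G^-)=0$ forces $h^1(G')=0$. Combining this with the preceding inequality gives the announced bound.

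For the comparison with Theorem \ref{thm:dim_sticht}, I would apply that theorem with $G=qG_0-G^-$ and $G_1:=G_0-G^-$; the required inequality $G\geq qG_1$ rewrites as $(q-1)G^-\geq 0$, which is automatic. Since $G^-$ has support disjoint from $G_0$, whenever $s_{G^-}\geq 1$ we have $G\ngeq 0$, so Stichtenoth's bound reads $n-\ell(h^0(G)-h^0(G_1))$. Riemann--Roch, together with the vanishings $h^1(G_1)=0$ (hypothesis) and $h^1(G)=0$ (by the inclusion $\Omega(G)\subseteq \Omega(G_1)$), gives $h^0(G)-h^0(G_1) = \deg G - \deg G_1 = (q-1)\deg G_0$, so Stichtenoth's bound becomes $n-\ell(q-1)\deg G_0$, which is strictly worse than $n-1+s_{G^-}-\ell(q-1)\deg G_0$ exactly when $s_{G^-}\geq 2$.

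The only mildly delicate step is checking the vanishing $h^1(G')=0$, which relies on the observation that $\Omega(\cdot)$ is monotone decreasing in its argument, combined with the hypothesis at $G_0-G^-$; everything else is bookkeeping of supports and the mechanical comparison of two linear expressions.
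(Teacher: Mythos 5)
Your argument is correct and follows the same backbone as the paper's proof: pass to the Cartier code, use the corollary of Theorem \ref{EgaliteGen} to replace $qG_0-G^-$ by $(q-1)G_0-G^-$, and apply Theorem \ref{thm:improve_dim}. The one place you diverge is the reduction step: the paper invokes Theorem \ref{thm:codim} with $G_1=G_0-G^-$ to obtain the \emph{equality} $Car_q(D,G)=C_{\Omega}(D,G)_{|\F_q}$ from $h^1(G_0-G^-)=0$, whereas you observe that the mere inclusion $Car_q(D,G)\subseteq C_{\Omega}(D,G)_{|\F_q}$ of Proposition \ref{prop:subf} already suffices for a lower bound on the dimension. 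This is a genuine (if small) economy: it makes clear that the hypothesis $h^1(G_0-G^-)=0$ is only needed to kill the term $h^1\bigl((q-1)G_0-G^-\bigr)$ arising in Theorem \ref{thm:improve_dim} --- a vanishing you rightly verify via the monotonicity $\Omega\bigl((q-1)G_0-G^-\bigr)\subseteq\Omega(G_0-G^-)$, and which the paper leaves implicit. Your Riemann--Roch comparison with Theorem \ref{thm:dim_sticht} reproduces exactly the remark following the paper's proof, so that part is also in order.
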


\begin{proof}
Set $G_1:=G_0-G^-$. Clearly, we have $G\geq qG_1$ and $G\geq G_1$.
  From Theorem \ref{thm:codim}, the assumption $h^1(G_0-G^-)=h^1(G_1)=0$ entails $Car_q (D,G)= C_{\Omega} (D,G)_{|\F_q}$. Then, from Theorem \ref{EgaliteGen}, we have $Car_q (D, (q-1)G_0-G^-)=Car_q (D, qG_0-G^-)$.
We conclude using Theorem \ref{thm:improve_dim}.
\end{proof}

As a comparison, Stichtenoth's Theorem \ref{thm:dim_sticht}, yields $n-\ell (q-1)\deg (G_0)$ as a lower bound for the dimension instead of $n-1+s_{G^-}-\ell (q-1)\deg (G_0)$.

\subsection{Infinite families of codes}\label{asymp}
Given an infinite family of codes $(C_i)_{i\in \N}$ with parameters $[n_i, k_i, d_i]$ with $n_i \rightarrow +\infty$, recall that we denote by $R$ and $\delta$ the asymptotic parameters of the family defined as:
$$
R:=\limsup_{i\rightarrow +\infty} \frac{k_i}{n_i} \qquad \delta :=\limsup_{i\rightarrow +\infty} \frac{d_i}{n_i} \cdot
$$
In \cite{KatsmanTsfasman}, the authors discuss the asymptotic performances of subfield subcodes of AG codes. For all even $\ell$, i.e. when $q^{\ell}$ is a square, they prove the existence of infinite families of such codes whose asymptotic parameters satisfy
\begin{equation}\label{eq:asymp}
R \geq 1-\frac{2(q-1)\ell}{q(q^{\ell /2} -1)}-\frac{(q-1)\ell}{q}\delta \qquad {\rm for}\qquad \frac{q-2}{q^{\ell/2}-1} \leq \delta \leq \frac{q}{m(q-1)} - \frac{2}{q^{\ell/2}-1}\cdot
\end{equation}
They prove in particular that such codes reach the Gilbert Varshamov bound for $\delta \sim 0$.

Now, let $G_0$ be a reduced positive divisor on $X$ such that $q\deg (G_0)>2g-2$ and consider a family of codes of the form $Car_q (D, qG_0)=Car_q (D, (q-1)G_0)$. Then, from Corollary \ref{cor:easy}, their parameters satisfy
$$
k \geq n-1-\frac{2(q-1)\ell}{q}g - \frac{(q-1)\ell}{q}d.
$$
If $\ell$ is even and hence if $q^{\ell}$ is a square, then the Drinfel'd Vl\u{a}du\c{t} Theorem (\cite[Theorem 3.2.3]{TVN}) asserts the existence of a family of Cartier codes whose parameters $R,\delta$ satisfy exactly the left--hand inequality of (\ref{eq:asymp}).
 In addition, the existence of a reduced positive divisor $G_0$ with $q\deg (G_0)>2g-2$ is asserted whenever the conditions on $\delta$ of (\ref{eq:asymp}) hold. See \cite{KatsmanTsfasman} for further details on the construction of such a divisor.

As a conclusion, Cartier codes and Subfield subcodes of AG codes have similar asymptotic performances.

\section{An Example}\label{sec:examples}

Computations are made using {\sc Magma} \cite{Magma}. A program generating Cartier Codes is available on the author's webpage.

\medbreak

Consider the Klein Quartic of equation $x^3 y+y^3 z +xz^3$ over $\F_8$.
This curve has genus $3$. It has $24$ $\F_8$--points including $P_1:=(0:1:0)$, $P_2:=(0:0:1)$ and $P_3:=(1:0:0)$. Denote by $Q_1, \ldots, Q_{21}$ the other rational points.  We also introduce $3$ places of degree $2$. Let $w$ be a primitive element of $\F_8/\F_2$ with minimal polynomial $T^3+T+1$ and $R_1, R_2, R_3$ be the three places of degree $2$ defined by the ideals: $\langle y^2 + w^5 y z + w^3 z^2,     x + w^3 y + wz \rangle,\ \langle   y^2 + w^3 y z + w^6 z^2, x + w^6 y + w^2 z  \rangle, \ \langle  y^2 + yz + z^2, x + y + z \rangle$.

We will construct codes over $\F_2$, i.e. $\ell=3$.
Set
$$
\begin{array}{ccc}
 D & := & Q_1+\cdots +Q_{21}\\
 G_0 & := & R_1+R_2+R_3  \\
 G^- & := & P_1+P_2+P_3.
\end{array}
$$
Computer-aided calculations give
\begin{equation}
  \label{eq:H1}
  h^1(G_0-G^-)=0 \qquad h^1(-G^-)=5
\end{equation}
and the following triple of parameters:
\begin{equation}
  \label{eq:computed_param}
\begin{array}{crccr}
  Car_2 (D, G_0-G^-): & [21, 6, 8]_2 & \qquad & C_{\Omega} (D, G_0-G^-)_{|\F_2}: & [21, 18, 2]_2 \\
   Car_2 (D, 2G_0-G^-): & [21, 6, 8]_2 & \qquad & C_{\Omega} (D, 2G_0-G^-)_{|\F_2}: & [21, 6, 8]_2
\end{array}
\end{equation}
This example illustrates several results presented before.

\subsection{Illustration of Theorem \ref{EgaliteGen}}
  \label{sec:trucs_et_machins}
Theorem \ref{EgaliteGen} asserts that 
$$
Car_2 (D,G_0-G^-)= Car_2 (D, 2G_0-G^-).
$$
It is confirmed in (\ref{eq:computed_param}). Indeed, the inclusion $Car_q (D, G_0-G^-) \supseteq Car_q (D, 2G_0-G^-)$ is obvious and both codes have dimension $6$. On the other hand, computing the parameters of the codes $C_{\Omega}(D,G_0-G^-)_{|\F_2}$ and $C_{\Omega}(D, 2G_0-G^-)_{\F_2}$ we observe that the codes are distinct and have respective parameters $[21, 18, 2]_2$ and $[21, 6, 8]_2$. Here, Theorem \ref{EgaliteGen} holds while
$$
C_{\Omega} (2G_0-G^-)_{|\F_2} \varsubsetneq C_{\Omega}(D,G_0-G^-)_{|\F_2}.
$$

\subsection{Illustration of Theorem \ref{thm:codim}}
  \label{sec:choses}
Set $G_1:=G_0-G^-$, we have $2G_0-G^- \geq 2G_1$ and $2G_0-G^- \geq G_1$. From (\ref{eq:H1}), $G_1$ is non special. Thus, Theorem \ref{thm:codim} asserts that $Car_2 (D, 2G_0-G^-)=C_{\Omega}(D, 2G_0-G^-)_{|\F_2}$, which is confirmed by the experience: they both have dimension $6$ and, from Proposition \ref{prop:subf}, the Cartier code is contained in the second one.

On the other hand, if we compare $Car_2 (D, G_0 -G^-)$ and $C_{\Omega}(D, G_0-G^-)_{\F_2}$ one can apply Theorem \ref{thm:codim} with $G_1=-G^-$. Then, from (\ref{eq:H1}), we have $h^1(-G^-)= 5$. Since $\ell=3$, the difference between the codimensions of the codes is at most $15$. The actual codimension is $12$. It is in particular an example of non equality $$Car_2 (D, G_0-G^-) \varsubsetneq C_{\Omega} (D, G_0-G^-)_{|\F_2}.$$

\subsection{Illustration of Theorem \ref{thm:improve_dim}}
In this example, Stichtenoth's Theorem \ref{thm:dim_sticht} asserts that
$\dim C_{\Omega}(D, G)_{|\F_2}\geq 3$, while Theorem \ref{thm:improve_dim} (or Corollary \ref{cor:impr}) asserts that this dimension is at least $5$ (we have $s_{G^-}=3$). As said before, the actual dimension is $6$.

\section*{Conclusion}
We give a new construction of codes which seems to be the most natural algebraic geometric generalisation of classical Goppa codes. In particular these codes satisfy equalities which are very similar to the relation $\Gamma (L, f^{q-1})=\Gamma (L, f^q)$ satisfied by Goppa codes.

In addition, we are able to bound below their parameters.
Our bounds on the dimension are obtained by two different manners, first by bounding above the codimension of the Cartier code as a subcode of the corresponding subfield subcode (Theorems~\ref{thm:codim} and \ref{dimC}).
Second, by a direct proof without using the known estimates on the  dimension of subfield subcodes. 
This second bound has a nice application to subfield subcodes of AG codes, since it improves in some situations Stichtenoth's bound for the dimension (Theorem~\ref{thm:dim_sticht}).

\section*{Acknowledgements}
The author wishes to thank Daniel Augot who encouraged him to work on this topic. He also expresses his gratitude to Niels Borne for many inspiring discussions.
A part of this work has been done when the author was a Post Doc researcher supported by the French ANR Defis program under contract
ANR-08-EMER-003 (COCQ project).
Special thanks to the anonymous referee for his/her efficiency and for the relevance of his/her comments. 

\bibliographystyle{abbrv}
\bibliography{biblio}
\end{document}